\documentclass[twoside,10pt]{article}
\topmargin        -0.08in
\oddsidemargin    -0.08in
\evensidemargin   -0.08in
\marginparwidth    0.00in
\marginparsep      0.00in
\textwidth         16.0cm
\textheight        22.5cm
\vskip 0.2cm
\usepackage[colorlinks]{hyperref}                        
\usepackage{amsmath,latexsym,amsfonts,indentfirst,amsthm,amsxtra,amssymb,bm}

\usepackage[perpage,symbol*]{footmisc}


\numberwithin{equation}{section}

\arraycolsep1.5pt
 \newtheorem{lemma}{Lemma}[section]
 \newtheorem{proposition}[lemma]{Proposition}
 \newtheorem{theorem}{Theorem}
 \newtheorem{corollary}[lemma]{Corollary}

 \theoremstyle{remark}
 \newtheorem{remark}{Remark}[section]

\pagestyle{myheadings}
\markboth {{ L. Wan}, { T. Wang} and {Q. Zou}}{{Outflow problem for full compressible NS}}

\begin{document}

\title{\bf Stability of  stationary solutions to the
outflow problem for full compressible Navier-Stokes equations
with large initial perturbation}
\author{{\bf Ling Wan}\,$^{\rm a}$,
{\bf Tao Wang}\,$^{\rm a,}$\thanks{Corresponding author.
E-mail address: tao.wang@whu.edu.cn}\,\,,
{\bf Qingyang Zou}\,$^{\rm b}$}


\maketitle
\begin{center}
$^{\rm a}$\,\emph{\small
School of Mathematics and Statistics, Wuhan University,
Wuhan 430072, China}
\\
$^{\rm b}\,$\emph{\small
School of Science, Wuhan University of Science and Technology,
Wuhan 430081, China}

\end{center}

\begin{abstract}
We investigate the large-time behavior of solutions
to an outflow problem of
the full compressible Navier-Stokes equations in the half line.
The non-degenerate stationary solution is shown to be
asymptotically stable
under large initial perturbation with no restriction
on the adiabatic exponent
$\gamma$, provided that the boundary strength is sufficiently small.
The proofs are based on the standard
energy method and the crucial step is to
obtain positive lower and upper bounds of the density and the temperature
uniformly in time and space.

  \bigbreak
  {\bf Keywords:} full compressible Navier-Stokes equations;
outflow problem; stationary solution; stability; large initial perturbation
  \bigbreak
  {\bf Mathematics Subject Classification:} 76N10 (35B35 35B40 35Q35)
\end{abstract}

\section{Introduction}
We study an initial-boundary value problem for the full compressible
Navier-Stokes equations
\begin{equation} \label{NS_E}
  \left\{
  \begin{aligned}
    \rho_t+(\rho u)_x&=0,\\[0.5mm]
    (\rho u)_t+(\rho u^2+ P)_x&=(\mu u_x)_x,\\[0.5mm]
    (\rho{E})_t+(\rho u{E}+uP)_x&
    =(\kappa\theta_x+\mu uu_x)_x,
  \end{aligned}
  \right.
\end{equation}
where $t>0$ is the time variable, $x\in\mathbb{R}_+:=(0,\infty)$
is the spatial variable,
and the primary dependent variables are the density $\rho$, fluid velocity
$u$ and absolute temperature $\theta$. The specific total energy
${E}=e+\frac12u^2$
with $e$ being the specific internal energy.
The viscosity coefficient $\mu$ and
the heat conductivity $\kappa$ are assumed to be positive constants.
Here we focus on the ideal polytropic gas, that is, the pressure $P$ and
the specific internal energy $e$ are given by the
constitutive relations
\begin{equation}\label{id-po}
  P=R\rho\theta
,   \quad e=c_v\theta,
\end{equation}
where $R>0$ is the gas constant and the specific heat $c_v=R/(\gamma-1)$
with $\gamma>1$ being the adiabatic exponent.

The system \eqref{NS_E} is supplemented with the initial data
\begin{equation}\label{initial}
(\rho, u, \theta)|_{t=0}=(\rho_0, u_0, \theta_0),
\end{equation}
which are assumed to satisfy
\begin{equation}\label{initial1}
  \lim_{x\to\infty}(\rho_0, u_0, \theta_0)(x)=(\rho_+,u_+,\theta_+),
\end{equation}
where $\rho_+>0$, $u_{+}$ and $\theta_{+}>0$ are constants.
For boundary conditions, we take
\begin{equation}
  \label{bdy}
   u(t,0)=u_-,\quad \theta(t,0)=\theta_{-}>0,
\end{equation}
where $u_{-}<0$ and $\theta_-$ are constants.

The assumption $u_-<0$ means that the fluid blows out from the boundary,
and hence the problem \eqref{NS_E}-\eqref{bdy} is called the
outflow problem.
In the case of $u_-=0$ the problem is called
the impermeable wall problem,
while the problem \eqref{NS_E}-\eqref{bdy} with
the additional boundary condition $\rho(t,0)=\rho_-$
in the case of $u_->0$
is called the inflow problem.

Matsumura \cite{Ma01MR1944189} considered initial-boundary value
problems for the isentropic
Navier-Stokes equations in the half line $\mathbb{R}_+$
and proposed
a complete classification about
the precise description of the large-time behaviors of solutions.
Since then, some results have been obtained for
the rigorous mathematical justification of this classification.
For the impermeable wall problem,
the stability for the viscous shock wave
is obtained in \cite{MM99MR1682659} for small initial perturbation,
while Matsumura-Nishihara \cite{MN00MR1738558} showed that
the rarefaction wave is asymptotically stable under large initial perturbation.
For the inflow problem under small initial perturbation,
see \cite{MN01MR1888084} for
the stability of the boundary layer solution and
its superposition with the rarefaction wave,
and see \cite{HMS03MR1997442} for
the stability of the viscous shock
wave and its superposition  with the boundary layer solution.
Recently, Fan et al. \cite{FLWZMR3260233} established the asymptotic stability of both
the boundary layer solution and the supersonic rarefaction wave for
a certain class of large initial perturbation.
For the outflow problem,
the stability of the stationary solution
and its superposition with the rarefaction wave
is proved in
\cite{KNZ03MR2005853,KZ08MR2420517}
under small initial perturbation,
while a convergence rate of solutions
toward the stationary solution was obtained in
\cite{NNYMR2356211} provided that the initial
perturbation belongs to some weighted Sobolev space.
Huang-Qin \cite{HQ09MR2514736} improved the results in
\cite{KNZ03MR2005853,KZ08MR2420517} to large initial perturbation.

There have been some works on
the large-time behaviour of solutions
to the initial-boundary value problems of the full compressible Navier-Stokes equations
\eqref{NS_E}-\eqref{id-po} in the half line.
For the inflow problem,
Qin-Wang \cite{QW09MR2578799,QW11MR2765694} showed the existence of the boundary layer solution
as well as the stability of the boundary layer
and its superposition with the viscous contact wave and rarefaction waves.
The asymptotic stability of the rarefaction wave, boundary layer solution,
and their superposition is proved in \cite{HLSMR2730324} for
both the impermeable wall problem and the inflow problem.
Kawashima et al. \cite{KNNZMR2755498} proved the existence, the nonlinear stability
and the convergence rate of the stationary solution for the outflow problem.
We note that
the results in \cite{HLSMR2730324,KNNZMR2755498,QW09MR2578799,QW11MR2765694} are all concerned
with small initial perturbation.
Thus a problem of interest is whether
stability results on the stationary solution and the rarefaction wave
hold for the outflow problem \eqref{NS_E}-\eqref{bdy} with
large initial perturbation as in \cite{HQ09MR2514736} for the isentropic model.
In this direction, motivated by \cite{NYZMR2083790},
Qin \cite{Q11MR2785974}
proved that the non-degenerate stationary solution is
asymptotically stable under ``partially''
large initial perturbation with the technical condition
that the adiabatic exponent $\gamma$ is close to 1.

In this article, we establish
the large-time behavior of solutions toward
stationary solutions
to the outflow problem \eqref{NS_E}-\eqref{bdy}
under
large initial perturbation
without any restriction on the adiabatic exponent $\gamma$.
We expect that the solution
converges to a stationary solution
$(\tilde{\rho},\tilde{u},\tilde{\theta})(x)$ of \eqref{NS_E}:
\begin{equation}\label{BL}
 \left\{
  \begin{aligned}
    (\tilde{\rho}\tilde{u})'&=0,\qquad\qquad\qquad ':=\frac{d}{dx},
    \quad x\in\mathbb{R}_+,\\
    (\tilde{\rho}\tilde{u}^2+ \tilde{P})'&=\mu \tilde{u}'',\\[0.9mm]
    (\tilde{\rho} \tilde{u}\tilde{E}
    +\tilde{u}\tilde{P})'
    &=\kappa\tilde{\theta}''+\mu (\tilde{u}\tilde{u}')',
  \end{aligned}
  \right.
\end{equation}
where
$\tilde{P}:=R\tilde{\rho}\tilde{\theta}$
and
$\tilde{E}:=c_v\tilde{\theta}+\frac12\tilde{u}^2$.
We also assume that the stationary solution satisfies
the far-field condition \eqref{initial1} and the
boundary condition \eqref{bdy}:
\begin{equation}\label{BL2}
      (\tilde{u},\tilde{\theta})(0)=(u_-,\theta_-),\quad
    \lim_{x\to\infty}(\tilde{\rho},\tilde{u},\tilde{\theta})(x)
    =(\rho_+,u_+,\theta_+).
\end{equation}
We define the boundary strength $\delta$ as
$$
\delta:=\left|(u_+-u_-,\theta_+-\theta_-)\right|,
$$
and the Mach number at infinity as
$$
M_+:=\frac{|u_+|}{c_+},
$$
where $c_+:=\sqrt{R\gamma\theta_+}$ is the sound speed.
The existence and the properties of the stationary solution
$(\tilde{\rho},\tilde{u},\tilde{\theta})$
satisfying \eqref{BL} and \eqref{BL2} are quoted in
the following lemma.
\begin{lemma}
  [Existence of stationary solution \cite{KNNZMR2755498}]
  Suppose that $(u_-,\theta_-)$ satisfies
  \begin{equation}\label{c1}
   (u_-,\theta_-)\in \mathcal{M}^+:=\left\{(u,\theta)\in\mathbb{R}^2:
    \left|(u-u_+,\theta-\theta_+)\right|<\delta_0\right\}
  \end{equation}
  for a certain positive constant $\delta_0$.
\begin{list}{}{\setlength{\parsep}{\parskip}
             \setlength{\itemsep}{\parskip}
             \setlength{\labelwidth}{2em}
             \setlength{\labelsep}{0.4em}
             \setlength{\leftmargin}{2.2em}
             \setlength{\topsep}{1mm}
             }
    \item[{\rm(i)}] For the case $M_+>1$, there exists a unique smooth solution
  $(\tilde{\rho},\tilde{u},\tilde{\theta})$ to the problem \eqref{BL}-\eqref{BL2}
  satisfying
  \begin{equation}\label{decay1}
   |\partial_x^n(\tilde{\rho}-\rho_+,\tilde{u}-u_+,\tilde{\theta}-\theta_+)(x)|
   \leq C\delta e^{-cx}
  \end{equation}
  for integer $n=0,1,2,\cdots,$
  where $C$ and $c$ are positive constants.
  \item[{\rm(ii)}] For the case $M_+=1$,
   there exists a certain region $\mathcal{M}^0\subset \mathcal{M}^+$
   such that if $(u_-,\theta_-)\in \mathcal{M}^0$, then there exists
   a unique smooth solution $(\tilde{\rho},\tilde{u},\tilde{\theta})$
to \eqref{BL}-\eqref{BL2}
satisfying
  \begin{equation}\label{decay2}
   |\partial_x^n(\tilde{\rho}-\rho_+,\tilde{u}-u_+,\tilde{\theta}-\theta_+)(x)|
   \leq \frac{C\delta^{n+1}}{(1+\delta x)^{n+1}}
   +C\delta e^{-cx}
  \end{equation}
  for integer $n=0,1,2,\cdots.$
  \item[{\rm(iii)}] For the case $M_+<1$, there exists a curve $\mathcal{M}^-\subset
  \mathcal{M}^+$
  such that if
  $
   (u_-,\theta_-)\in \mathcal{M}^+,
  $
  then there exists a unique smooth solution $(\tilde{\rho},\tilde{u},\tilde{\theta})$
to the problem \eqref{BL}-\eqref{BL2}
  satisfying \eqref{decay1}.
\end{list}
\end{lemma}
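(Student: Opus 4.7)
I would construct the stationary solution by reducing \eqref{BL}--\eqref{BL2} to a planar autonomous ODE and analyzing the local dynamics near the equilibrium $(u_+,\theta_+)$. The first equation of \eqref{BL} integrates to $\tilde\rho\tilde u\equiv m:=\rho_+u_+$, so $\tilde\rho=m/\tilde u$ is determined by $\tilde u$. Substituting this and integrating the remaining equations from $x$ to $\infty$ (using the far-field conditions in \eqref{BL2}) produces the first-order system
\begin{equation*}
\mu\tilde u' = m(\tilde u-u_+)+Rm\Bigl(\frac{\tilde\theta}{\tilde u}-\frac{\theta_+}{u_+}\Bigr),\qquad
\kappa\tilde\theta'+\mu\tilde u\tilde u' = mc_p(\tilde\theta-\theta_+)+\frac{m}{2}\bigl(\tilde u^2-u_+^2\bigr),
\end{equation*}
where $c_p=c_v+R$, subject to $(\tilde u,\tilde\theta)(0)=(u_-,\theta_-)$ and the limit $(u_+,\theta_+)$ as $x\to\infty$.

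A direct calculation shows that the determinant of the Jacobian of the right-hand side at $(u_+,\theta_+)$ is proportional to $m^2(M_+^2-1)$, so the linearization is hyperbolic precisely when $M_+\ne 1$. In case (i), $M_+>1$, both eigenvalues turn out to have negative real part (using $m<0$ in the outflow setting), making $(u_+,\theta_+)$ a stable node for the forward $x$-direction; the stable manifold theorem then supplies a two-dimensional family of trajectories converging to $(u_+,\theta_+)$ at exponential rate $e^{-cx}$. Every $(u_-,\theta_-)\in\mathcal{M}^+$ with $\delta_0$ small enough lies on a unique such trajectory, and inductive differentiation of the ODE propagates this rate to all higher-order derivatives, yielding \eqref{decay1}. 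In case (iii), $M_+<1$, the eigenvalues have opposite signs, so $(u_+,\theta_+)$ is a saddle; admissible boundary data are constrained to the one-dimensional stable manifold, whose intersection with $\mathcal{M}^+$ gives the curve $\mathcal{M}^-$, and the exponential decay \eqref{decay1} again follows from hyperbolicity.

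The main obstacle is the degenerate sonic case (ii), $M_+=1$, where one eigenvalue vanishes. Here I would invoke the center-manifold theorem to construct a smooth one-dimensional invariant center manifold tangent to the kernel of the linearization, complemented by a one-dimensional stable manifold along the remaining (negative) eigenvalue. Expanding the reduced vector field on the center manifold to second order yields a normal form $\xi'=a\xi^2+O(\xi^3)$ with a nonzero coefficient $a$ determined by the nonlinearities. Only initial values $\xi(0)$ whose sign matches that of $-a$ produce trajectories decaying algebraically like $(1+\delta x)^{-1}$, the opposite sign leading to finite-$x$ blow-up; this sign restriction is precisely what carves out the admissible subset $\mathcal{M}^0\subset\mathcal{M}^+$. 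Decomposing a general solution into its center and hyperbolic components then superposes the $\delta(1+\delta x)^{-1}$ and $\delta e^{-cx}$ contributions in \eqref{decay2}, and inductive differentiation transfers the rates to the higher derivatives, each differentiation picking up an additional factor of $(1+\delta x)^{-1}$ or $e^{-cx}$ through the ODE. The delicate step in this case is the computation of $a$ and the verification of uniform smallness of the center-manifold graph over $\mathcal{M}^0$, since the quadratic behaviour is what distinguishes the algebraic rate from the purely exponential one.
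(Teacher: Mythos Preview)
The paper does not prove this lemma at all: it is quoted verbatim from Kawashima--Nakamura--Nishibata--Zhu \cite{KNNZMR2755498}, and the surrounding text explicitly says the existence and properties of the stationary solution ``are quoted in the following lemma.'' So there is nothing in the present paper to compare your argument against.

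That said, your sketch is essentially the strategy carried out in \cite{KNNZMR2755498}: integrate the mass equation to eliminate $\tilde\rho$, reduce to a planar autonomous system in $(\tilde u,\tilde\theta)$, and classify the equilibrium $(u_+,\theta_+)$ via the Jacobian. The supersonic case $M_+>1$ is a hyperbolic sink, the subsonic case $M_+<1$ a saddle (whence the one-dimensional stable manifold $\mathcal{M}^-$), and the sonic case $M_+=1$ is handled by a center-manifold reduction leading to the algebraic rate in \eqref{decay2}. Two points deserve care if you intend to write this out in full: first, the sign analysis of the eigenvalues relies on $u_+<0$ (so that $m=\rho_+u_+<0$), which is implicit in the outflow setting but should be stated; second, in the sonic case the sign of the quadratic coefficient $a$ in the center-manifold normal form must be computed explicitly, since it determines which half of the center direction produces bounded orbits and hence the precise shape of the region $\mathcal{M}^0$. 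Both of these are worked out in \cite{KNNZMR2755498}, and your outline matches that argument.
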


Our main result is now stated as follows.
\begin{theorem} \label{thm}
  Assume that there exists an
  stationary solution $(\tilde{\rho},\tilde{u},\tilde{\theta})$ to the problem
  \eqref{BL}-\eqref{BL2}
  satisfying \eqref{decay1}.
  If
  \begin{equation}\label{thm_1}
    \inf_{x\in\mathbb{R}_+}(\rho_0,\theta_0)(x)>0,\quad
    (\rho_0-\tilde{\rho},u_0-\tilde{u},\theta_0-\tilde{\theta})\in H^1(\mathbb{R}_+),
  \end{equation}
  then there is a positive constant $\epsilon_1$ such that if $\delta\leq \epsilon_1$,
  then the outflow problem $\eqref{NS_E}$-\eqref{bdy} admits a unique solution
  $(\rho,u,\theta)$ satisfying
  \begin{equation} \label{thm_2}
      \begin{gathered}
    (\rho-\tilde{\rho},u-\tilde{u},\theta-\tilde{\theta}) \in C([0,\infty);H^1(\mathbb{R}_+)),\\
    \rho_x-\tilde{\rho}_x\in L^2(0,\infty;L^2(\mathbb{R}_+)),\quad
    (u_x-\tilde{u}_x,\theta_x-\tilde{\theta}_x)\in L^2(0,\infty;H^1(\mathbb{R}_+)),
    \end{gathered}
  \end{equation}
  and
  \begin{equation}\label{thm_3}
    \lim_{t\to\infty}\sup_{x\in\mathbb{R}_+}
    |(\rho-\tilde{\rho},u-\tilde{u},\theta-\tilde{\theta})(t,x)|=0.
  \end{equation}
\end{theorem}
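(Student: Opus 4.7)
The plan is to follow the classical energy method: introduce the perturbation $(\phi,\psi,\zeta):=(\rho-\tilde\rho,\,u-\tilde u,\,\theta-\tilde\theta)$, establish local-in-time existence in $H^1(\mathbb R_+)$ by standard theory for quasilinear symmetric hyperbolic--parabolic systems (valid as soon as $\rho_0,\theta_0$ are bounded below), and then extend globally by a priori estimates that, crucially, do \emph{not} require smallness of $(\phi_0,\psi_0,\zeta_0)$. Smallness enters only through the boundary strength $\delta$ via the exponential decay \eqref{decay1}, which is repeatedly exploited to absorb error terms of the form $\delta\,|\tilde\rho_x|,|\tilde u_x|,|\tilde\theta_x|$ into the dissipation.

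The first analytical step is the basic relative-entropy estimate. Subtracting \eqref{BL} from \eqref{NS_E} and multiplying suitable combinations of the perturbation equations so as to reproduce the physical entropy, one is led to consider the Helmholtz-type functional
\[
\mathcal H(t)=\int_{\mathbb R_+}\Bigl\{\tfrac12\rho\psi^2+R\tilde\theta\,\rho\,\Phi(\rho/\tilde\rho)+c_v\rho\tilde\theta\,\Phi(\theta/\tilde\theta)\Bigr\}dx,\qquad \Phi(s)=s-1-\log s .
\]
A direct computation using the outflow condition $u_-<0$ yields, after integration by parts, an identity of the form
\[
\frac{d}{dt}\mathcal H(t)+\mathcal B(t)+\int_{\mathbb R_+}\Bigl(\frac{\mu\psi_x^2}{\theta}+\frac{\kappa\tilde\theta\,\zeta_x^2}{\theta^2}\Bigr)dx=\mathcal R(t),
\]
where $\mathcal B(t)\ge 0$ is a favourable boundary term generated by $u_-<0$ (this is the key mechanism that distinguishes the outflow case from the inflow / impermeable cases) and $\mathcal R(t)$ collects remainders proportional to $\delta$ via the stationary derivatives. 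This gives an $L^\infty_t L^2_x$ bound on $(\sqrt\rho\,\psi,\sqrt{\Phi(\rho/\tilde\rho)},\sqrt{\Phi(\theta/\tilde\theta)})$ and an $L^2_t L^2_x$ bound on $(\psi_x,\zeta_x)$, both depending on the size of the initial data but independent of $T$ and of any a priori smallness of the perturbation.

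The \emph{main obstacle}, and the step that the authors single out in the abstract, is to convert the above degenerate control into uniform pointwise bounds
\[
0<\underline\rho\le\rho(t,x)\le\overline\rho,\qquad 0<\underline\theta\le\theta(t,x)\le\overline\theta,\qquad (t,x)\in[0,\infty)\times\mathbb R_+ .
\]
I would follow Kanel'-type estimates adapted to the half line: exploiting the mass equation and the momentum equation one rewrites $\log(\rho/\tilde\rho)$ as a sum of quantities controlled by $\mathcal H$ and by the effective viscous flux $\mu u_x-P$, with the boundary contribution at $x=0$ handled by $u_-<0$ (which again provides sign-definite outflow). This yields two-sided bounds on $\rho$ depending only on the initial energy and on $\delta$. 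For $\theta$, once $\rho$ is bounded, I would apply a maximum-principle argument to the parabolic temperature equation, treating $\mu u_x^2$ as a source whose $L^1_tL^\infty_x$ norm is controlled by a second-order energy estimate (multiply the momentum equation by $-\psi_{xx}$ and the energy equation by $-\zeta_{xx}$, then absorb cubic terms using the already-known $\rho$-bounds and the $L^2_tH^1_x$ control of $(\psi_x,\zeta_x)$). The smallness of $\delta$ is used to absorb the stationary cross-terms into the dissipation on the right-hand side.

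With uniform bounds on $\rho,\theta$ in hand, the $H^1$ a priori estimate \eqref{thm_2} closes by standard arguments: the system for $(\psi,\zeta)$ is now uniformly parabolic, and the estimate for $\phi_x$ is recovered from the mass equation combined with the control on $\psi_x$ and a Gronwall step. Local existence plus these a priori bounds give the global solution stated in Theorem~\ref{thm}. The large-time decay \eqref{thm_3} then follows from the classical fact that $(\phi_x,\psi_x,\zeta_x)\in L^2(0,\infty;L^2)$ together with a bound on $\frac{d}{dt}\|(\phi_x,\psi_x,\zeta_x)\|_{L^2}^2\in L^1(0,\infty)$ forces $\|(\phi_x,\psi_x,\zeta_x)\|_{L^2}\to 0$, which via Sobolev embedding in one dimension implies $\|(\phi,\psi,\zeta)\|_{L^\infty}\to 0$. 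I expect the genuinely hard part to be the Kanel'-type pointwise control of $\theta$ from below, because the temperature equation picks up the quadratic source $\mu u_x^2$ that is not a priori small; this is precisely where the outflow sign $u_-<0$ and the $\delta$-smallness of the stationary layer must be used most carefully.
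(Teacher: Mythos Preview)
Your outline captures the broad architecture correctly---relative entropy, local existence, a priori bounds, then global continuation---but there is a genuine gap at the step you yourself flag as hardest, and the paper resolves it by an idea you do not mention.

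The central difficulty is that every a priori estimate in the paper (basic entropy, density bounds, higher-order energy) is proved only under a running assumption of the form
\[
\Xi(m_1,M_1,m_2,M_2)\,\delta\le\epsilon_0,\qquad \Xi=m_1^{-10}M_1^{10}m_2^{-10}M_2^{10},
\]
where $m_i,M_i$ are the assumed pointwise bounds on $\rho,\theta$ over $[0,T]$. The maximum principle you invoke for the lower bound of $\theta$ gives only $\theta(t,\cdot)\ge c/(Ct+1)$, so $m_2\to 0$ as $T\to\infty$ and no single choice of $\delta$ can satisfy the running assumption for all time. The paper breaks this circularity by a continuation argument (Section~3): once the uniform $H^1$ estimate is available on an interval of fixed length $T_1$, one has $\int_{T_1/2}^{T_1}\|\vartheta_x(s)\|^2\,ds\le C_4^2$, so by pigeonhole there is a time $t_0'\in[T_1/2,T_1]$ with $\|\vartheta_x(t_0')\|$ small enough that Sobolev embedding forces $\theta(t_0',\cdot)\ge\tfrac12\inf\tilde\theta$, a lower bound depending only on the stationary solution. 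Restarting the local theory from $t_0'$ and iterating, only finitely many distinct smallness thresholds on $\delta$ ever appear, and $\epsilon_1$ is taken to be their minimum. Without this restart mechanism your scheme does not close.

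A secondary divergence: for the density bounds the paper does \emph{not} run a Kanel argument in Eulerian coordinates. It passes to Lagrangian coordinates, where the outflow problem becomes a free-boundary problem, and adapts Jiang's localization technique---a cut-off function with moving parameter and a local representation formula for the specific volume---to obtain two-sided bounds on $v=1/\rho$ using only the entropy dissipation $\int(\psi_y^2/(v\theta)+\vartheta_y^2/(v\theta^2))$, \emph{before} any pointwise control on $\theta$ is available. Your proposed Kanel route would need to convert the weighted dissipation $\int\psi_x^2/\theta$ into genuine $L^2$ control, which again requires a lower bound on $\theta$ and feeds back into the circularity above. The paper's ordering---density first via the Lagrangian representation, then $\theta$ from above via $H^1$ estimates, then the continuation argument for $\theta$ from below---is precisely what removes the restriction $\gamma\approx 1$ that the earlier Kanel-based work needed.
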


\begin{remark}
  Theorem \ref{thm} shows that the non-degenerate stationary solution
  (i.e. the one which decays
  exponentially as \eqref{decay1})
  is asymptotically stable even for large initial perturbation
  with general adiabatic exponent $\gamma$, provided
that the boundary strength is small.
\end{remark}

To derive the large-time behavior of solutions toward stationary solutions,
it is sufficient to deduce certain uniform
(with respect to the time $t$) \emph{a priori} estimates on the perturbations.
The essential step is to get the lower and upper positive bounds for
both the density $\rho$ and the temperature $\theta$ uniformly in time and space.
In case of small initial perturbation, as in \cite{KNNZMR2755498},
we can use the smallness of
the \emph{a priori} $H^1$-norm of the perturbation to
obtain the uniform bounds of the density $\rho$ and the temperature $\theta$.
Based on  such uniform bounds and the smallness of the boundary strength $\delta$,
one can derive certain uniform \textmd{a priori} $H^1$-norm energy type estimates on
the perturbation.
In case that the adiabatic exponent $\gamma$ is close to 1,
as in \cite{Q11MR2785974}, we can control the lower and upper bounds of
$\theta$, and by using the Kanel's technique, we can  then
obtain the uniform positive bounds for $\rho$ provided
the boundary strength $\delta$ is small.

We are interested in showing the stability of the non-degenerate stationary solution
under large initial perturbation with general $\gamma$. 
For this purpose,
we first deduce the basic energy estimate
with the aid of the decay property of the non-degenerate stationary solution
 provided that
the boundary strength $\delta$ multiplied with a certain function of the
\emph{a priori} lower and upper bounds of density $\rho$ and temperature $\theta$
is suitably small (see Lemma \ref{lem_bas}).
Next, to get uniform pointwise bounds of the density $\rho$,
 we transform the outflow problem \eqref{NS_E}-\eqref{bdy} into
a free boundary problem in the Lagrangian coordinate.
By modifying
Jiang's argument in \cite{J99MR1671920,J02MR1912419} for fixed domains,
we will use a cut-off function with parameter to
localize the free boundary problem, and then
we will deduce a local representation of the specific volume $v=1/\rho$ to
establish the uniform bounds of $v$.
With such uniform bounds of the density $\rho$ in hand,
we can derive the $H^1$-norm (in the spatial variable $x$) estimate of
the perturbation $\theta(t,x)-\tilde{\theta}(x)$ uniformly in the time $t$
in the Eulerian coordinate.
Then the uniform upper bound of the temperature $\theta$ is obtained in light of
the Cauchy's inequality.
We note that
our derivation of the uniform upper bound of $\theta$
is motivated by the recent work \cite{LL}, which is concerned with
the stability of the constant state to the compressible Navier-Stokes equations
on unbounded but fixed domains in the Lagrangian coordinate.
The maximum principle gives us the lower bound of the temperature
$\theta$ locally in time $t$.
In view of the \emph{a priori} assumption \eqref{lem1},
we have to get the uniform positive lower bound of the temperature $\theta$,
which will be achieved by combining the local lower bound
of $\theta$ and the detailed continuation argument.

Another interesting problem is on the asymptotical stability of the
rarefaction wave and its superposition with the non-degenerate
 stationary solution to the outflow problem \eqref{NS_E}-\eqref{bdy}
under large initial perturbation.
See \cite{HQ09MR2514736,KZ08MR2420517,KZ09MR2533925} for
the isentropic case and \cite{Q11MR2785974} for
the case of small initial perturbation.
We will consider this problem in a forthcoming paper.

The layout of this paper is as follows. After stating the notations,
in section 2, we establish the desired energy estimates and
obtain the uniform bounds of both density and temperature.
In section 3, we
extend the local solution step by step to a global one
and prove the stability of the stationary solution
by combining the a priori estimates obtained in section 2 with the
the continuation argument.

\bigbreak
\noindent \emph{Notation.}
We employ $C$ or $C_i$ $(i\in\mathbb{N})$ to
denote a generic positive constant which is
independent of $t$, $x$ and $\delta$.
Notice that all the constants may change from line to line.
The Gaussian bracket $[x]$ means
the largest integer not greater than $x$.
For function spaces, $L^p(\mathbb{R}_+)$~$(1\leq p\leq \infty)$ stands for
the usual Lebesgue space on $\mathbb{R}_+$ equipped with the norm $\|{\cdot}\|_{L^p}$
and~$H^k(\mathbb{R}_+)$~the usual Sobolev space in the $L^2$ sense with
norm~$\|\cdot\|_k$.
We use the notation $\|\cdot\|=\|\cdot\|_{L^2}$.
We denote by $C^k(I; H^p)$ the space of $k$-times continuously
differentiable functions on the interval $I$ with values in
$H^p(\mathbb{R}_+)$ and~$L^2(I; H^p)$ the space of~$L^2$-functions
on $I$ with values in~$H^p(\mathbb{R}_+)$.

\section{A priori estimates}
\subsection{Reformation of the problem}
This section is devoted to deriving a priori estimates on the solution
$(\rho,u,\theta)$ to the outflow problem \eqref{NS_E}-\eqref{bdy}.
To this end, we regard the solution $(\rho,u,\theta)$ as a perturbation
from the stationary solution $(\tilde{\rho},\tilde{u},\tilde{\theta})$
and put the perturbation $(\phi,\psi,\vartheta)$ by
\begin{equation}\label{per_def}
  (\phi, \psi, \vartheta)(t,x):=(\rho, u, \theta)(t,x)-(\tilde{\rho}, \tilde{u}, \tilde{\theta})(x).
\end{equation}
Subtracting \eqref{BL}-\eqref{BL2} from \eqref{NS_E}-\eqref{bdy} yields
\begin{equation}\label{per}
\left\{
  \begin{aligned}
    \phi_t+u\phi_x+\rho\psi_x&=-\tilde{u}_x\phi-\tilde{\rho}_x\psi,\qquad\quad t>0,\ x\in\mathbb{R}_{+},\\[0.5mm]
    \rho(\psi_t+u\psi_x)+(P-\tilde{P})_x&=\mu\psi_{xx}+g,\\[0.5mm]
    c_v\rho(\vartheta_t+u\vartheta_x)+P\psi_x
    &=\kappa\vartheta_{xx}+\mu\psi_x^2+h,\\[0.5mm]
    (\phi, \psi, \vartheta)|_{t=0}&=(\phi_0, \psi_0, \vartheta_0),\\[0.5mm]
    (\psi, \vartheta)|_{x=0}&=(0,0),
  \end{aligned}\right.
\end{equation}
where
\begin{equation} \label{gh}
     g:=-\tilde{u}_x(\tilde{u}\phi+{\rho}\psi),\quad
    h:=-c_v\tilde{\theta}_x(\tilde{u}\phi+{\rho}\psi)
    -\tilde{u}_x(P-\tilde{P})
    +2\mu\tilde{u}_x\psi_x,
\end{equation}
and the initial condition
$(\phi_0, \psi_0, \vartheta_0)
    :=(\rho_0-\tilde{\rho},u_0-\tilde{u},\theta_0-\tilde{\theta})$
    satisfies
\begin{equation}\label{per_far}
  (\phi_0, \psi_0, \vartheta_0)(x)
    \to(0,0,0),\quad {\rm as}\ x\to \infty.
\end{equation}

The solution space $X(0,T;m_1,M_1;m_2,M_2)$ is defined by
\begin{multline*}
       X(0,T;m_1,M_1;m_2,M_2):=\big\{(\phi,\psi,\vartheta)
      \in C([0,T];H^1):
     (\psi_x,\vartheta_x)\in L^2(0,T;H^1),\\
     \phi_x\in L^2(0,T;L^2),\
     m_1\leq  \phi+\tilde{\rho}\leq M_1,\ m_2\leq
     \vartheta+\tilde{\theta}\leq M_2\big\}.
\end{multline*}
We summarize the local existence of solutions
to the problem \eqref{per} in the following proposition,
which can be proved by the standard iteration method (see \cite{HMS04MR2040072}).
\begin{proposition}[Local existence] \label{Pro_loc}
   Suppose that the conditions in Theorem \ref{thm} hold.
   If  $\|(\phi_0,\psi_0,\vartheta_0)\|_1\leq M,$
  $\lambda_1\leq \phi_0(x)+\tilde{\rho}(x)\leq \Lambda_1,$ and
  $\lambda_2\leq \vartheta_0(x)+\tilde{\theta}(x)\leq \Lambda_2$ hold
for all $x\in\mathbb{R}_+$,
then  \eqref{per} admits a unique solution $(\phi,\psi,\vartheta)\in
X\left(0,T_0;\frac{1}{2}\lambda_1,2\Lambda_1;
\frac{1}{2}\lambda_2,2\Lambda_2\right)$
for some constant $T_0=T_0(\lambda_{1},\lambda_{2},M)>0$
depending only on $\lambda_{1}$, $\lambda_{2}$ and $M$.
\end{proposition}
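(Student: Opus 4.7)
The plan is to prove local existence via a standard linearized iteration scheme, since the nonlinear perturbation system \eqref{per} decouples cleanly into a first-order transport equation for $\phi$ and two parabolic equations for $\psi$ and $\vartheta$ once one freezes the lower-order coefficients. Concretely, starting from $(\phi^0,\psi^0,\vartheta^0)=(\phi_0,\psi_0,\vartheta_0)$, I would define $(\phi^{n+1},\psi^{n+1},\vartheta^{n+1})$ by solving
\begin{equation*}
\phi^{n+1}_t+u^n\phi^{n+1}_x+\rho^n\psi^{n+1}_x=-\tilde{u}_x\phi^{n+1}-\tilde{\rho}_x\psi^{n+1},
\end{equation*}
together with the linearized momentum and energy equations, where $\rho^n=\phi^n+\tilde{\rho}$, $u^n=\psi^n+\tilde{u}$ and all other coefficients and inhomogeneities are evaluated at the previous iterate. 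The boundary conditions $\psi^{n+1}(t,0)=\vartheta^{n+1}(t,0)=0$ and the initial data are imposed at every level.

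At each step, existence of $\phi^{n+1}$ with the claimed regularity follows from the method of characteristics applied to the linear transport equation, while $\psi^{n+1}$ and $\vartheta^{n+1}$ are produced by standard linear parabolic theory (e.g.\ Galerkin approximation or semigroup methods). The key point is that solving the transport equation along characteristics yields a representation of the form $\rho^{n+1}(t,x)=\rho_0(X(0;t,x))\exp(\int_0^t\cdots ds)$, so for $T_0$ sufficiently small (depending only on $\lambda_1,\Lambda_1$ and the uniform $H^1$ size $M$, via Sobolev embedding $H^1(\mathbb{R}_+)\hookrightarrow L^\infty$), one obtains $\tfrac{1}{2}\lambda_1\leq\rho^{n+1}\leq 2\Lambda_1$. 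For the temperature, since $\vartheta^{n+1}+\tilde{\theta}$ satisfies a linear parabolic equation with bounded coefficients, the standard maximum principle together with the smallness of $T_0$ and continuity in time (again invoking $H^1\hookrightarrow L^\infty$) yields $\tfrac{1}{2}\lambda_2\leq\vartheta^{n+1}+\tilde{\theta}\leq 2\Lambda_2$.

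Next, I would derive $n$-independent $H^1$ energy estimates. Multiplying the $\phi^{n+1}$-equation by $\phi^{n+1}$ and differentiating and multiplying by $\phi^{n+1}_x$ gives bounds for $\|\phi^{n+1}\|_1$ in terms of $T_0$ and the previous iterate; similarly, testing the $\psi^{n+1}$- and $\vartheta^{n+1}$-equations against $\psi^{n+1},\psi^{n+1}_{xx}$ and $\vartheta^{n+1},\vartheta^{n+1}_{xx}$ (using the homogeneous boundary conditions to integrate by parts) produces $L^2_t H^1_x$ control of $(\psi^{n+1}_x,\vartheta^{n+1}_x)$ plus $L^\infty_t H^1_x$ control of the iterates. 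For $T_0$ small enough, the nonlinear contributions on the right are absorbed, and one concludes that the sequence stays in $X(0,T_0;\tfrac{1}{2}\lambda_1,2\Lambda_1;\tfrac{1}{2}\lambda_2,2\Lambda_2)$.

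Finally, I would show contraction in a weaker norm: writing the equations for the differences $\phi^{n+1}-\phi^n$, etc., and performing the same basic $L^2$ energy estimate (without differentiation), the uniform $H^1$ bounds together with the smallness of $T_0$ yield a factor strictly less than one, so the iterates form a Cauchy sequence in $C([0,T_0];L^2)$. Interpolation with the uniform $H^1$ bound upgrades convergence to $C([0,T_0];H^s)$ for every $s<1$, which suffices to pass to the limit in the nonlinear terms. The limit inherits the $X$-regularity by weak-$*$ compactness and satisfies \eqref{per}; uniqueness follows from the same $L^2$ difference estimate applied to any two solutions. The main obstacle here, and the reason the time $T_0$ must depend on $\lambda_1,\lambda_2$ in addition to $M$, is preserving the strict positivity of $\rho$ and $\theta$ along the iteration: this is where the transport representation for $\rho$ and the parabolic maximum principle for $\theta$ are essential, since $H^1$-smallness alone does not force pointwise positivity.
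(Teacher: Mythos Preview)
Your proposal is correct and follows essentially the same route the paper has in mind: the paper does not actually write out a proof of this proposition but simply states that it ``can be proved by the standard iteration method'' with a reference, and your linearize--iterate--uniform-bounds--contraction scheme is precisely that standard method. One small remark: in your iteration you couple $\phi^{n+1}$ to $\psi^{n+1}$ rather than $\psi^{n}$, which is fine provided you solve the parabolic equations first at each step, but the more customary (and slightly cleaner) choice is to evaluate all coefficients and forcing terms at level $n$; either variant closes in the same way.
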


We now turn to deduce certain uniform a priori estimates for the perturbation
$(\phi,\psi,\vartheta)\in X(0,T;m_1,M_1,m_2,M_2)$ in the Sobolev space $H^1$.
Before doing so, we recall that
$C$ or $C_i$ $(i\in\mathbb{N})$ will be used to denote
some generic positive constant which depends only on
$\inf_{\mathbb{R}_+}(\rho_0,\theta_0)$
  and $\|(\phi_0,\psi_0,\vartheta_0)\|_1$.
  For notational simplicity, we introduce
  $A\lesssim B$ if $A\leq C B$ holds uniformly for some constant
  $C$ depending only on
 $\inf_{\mathbb{R}_+}(\rho_0,\theta_0)$
  and $\|(\phi_0,\psi_0,\vartheta_0)\|_1$.
  The notation $A\sim B$ means that both $A\lesssim B$  and $B\lesssim A$.
  Besides, we will use the notation $(\rho, \theta)=(\phi+\tilde{\rho},
  \vartheta+\tilde{\theta})$ so that
  \begin{equation}\label{apriori}
    m_1\leq {\rho}(t,x)\leq M_1,\quad
  m_2\leq   {\theta}(t,x)\leq M_2, \quad {\rm for\ all}\
  (t,x)\in[0,T]\times\mathbb{R}_+.
  \end{equation}
Without loss of generality, we may assume that $m_i\leq 1\leq M_i$ for $i=1,2$.

\subsection{Basic energy estimate}

First, we have the following basic energy estimate.
\begin{lemma}\label{lem_bas}
 There exists a sufficiently small $\epsilon_0>0$ such that if
 \begin{equation}\label{lem1}
   \Xi(m_1,M_1,m_2,M_2)\delta\leq \epsilon_0,\quad
   \Xi(m_1,M_1,m_2,M_2):=m_1^{-10}M_1^{10}m_2^{-10}M_2^{10},
 \end{equation}
 then
 \begin{align}\label{lem2}
   &\int_{\mathbb{R}_+}\frac{\phi_x^2}{\rho^3}+\int_0^t\frac{\phi_x^2}{\rho^3}(s,0)ds
    +\int_0^t\int_{\mathbb{R}_+}\frac{\theta\phi_x^2}{\rho^2}
    \lesssim 1+\|\theta\|_{L^\infty([0,T]\times\mathbb{R}_+)},
  \\ \label{lem3}
    &\int_{\mathbb{R}_+}\rho \mathcal{E}
    +\int_0^t\rho\Phi\left(\frac{\tilde{\rho}}{\rho}\right)(s,0)ds
    +\int_0^t\int_{\mathbb{R}_+}\left[\frac{\psi_x^2}{\theta}+
    \frac{\vartheta_x^2}{\theta^2}\right]
    \lesssim 1,
  \end{align}
  where
  \begin{equation}\label{lem2.1.4}
      \mathcal{E}:=R\tilde{\theta}\Phi\left(\frac{\tilde{\rho}}{\rho}\right)+\frac12\psi^2
  +c_v\tilde{\theta}\Phi\left(\frac{\theta}{\tilde{\theta}}\right),
  \quad \Phi(z):= z-\ln z-1.
  \end{equation}
\end{lemma}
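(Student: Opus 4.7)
My plan splits the two inequalities and handles them in order. For \eqref{lem3} I follow the standard relative-entropy argument. I compute $\partial_t(\rho\mathcal{E})$ using the perturbation system \eqref{per} together with the stationary identities \eqref{BL}, which produces an identity of the form
\[
\partial_t(\rho\mathcal{E})+\partial_x\mathcal{F}+\frac{\mu\psi_x^2}{\theta}+\frac{\kappa\vartheta_x^2}{\theta^2}=\mathcal{R},
\]
where $\mathcal{F}$ is a convective-diffusive flux and $\mathcal{R}$ collects every source proportional to $\tilde\rho_x$, $\tilde u_x$, $\tilde\theta_x$ or to the terms $g$, $h$ from \eqref{gh}. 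Integrating over $[0,t]\times\mathbb{R}_+$, the far-field term vanishes because the perturbation lies in $H^1$; the boundary term at $x=0$ reduces, via $\psi(t,0)=\vartheta(t,0)=0$, to $-u_-\rho\Phi(\tilde\rho/\rho)(t,0)\ge 0$ thanks to the outflow sign $u_-<0$; and the remainder $\mathcal{R}$ is controlled term-by-term through Cauchy's inequality, the exponential decay \eqref{decay1} of the stationary profile, and the smallness assumption $\Xi\delta\le\epsilon_0$ from \eqref{lem1}.

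For \eqref{lem2}, I exploit a Bresch--Desjardins-type identity derived from \eqref{NS_E}. Differentiating the continuity equation and combining it with the momentum equation yields
\[
\rho\,\frac{D}{Dt}\!\left(u-\mu\left(\tfrac{1}{\rho}\right)_{\!x}\right)=-P_x,\qquad \frac{D}{Dt}:=\partial_t+u\partial_x,
\]
and subtracting the analogous stationary relation gives an evolution equation for the perturbed ``effective velocity''
\[
W:=\psi+\mu\left(\frac{\phi}{\rho\tilde\rho}\right)_{\!x}.
\]
Multiplying this equation by a suitable $\rho$-weight, chosen so that the quadratic part collapses precisely onto $\phi_x^2/\rho^3$, and integrating in space, the principal contribution $-W(P-\tilde P)_x$, after one integration by parts and the substitution $P=R\rho\theta$, yields the coercive dissipation $R\theta\phi_x^2/\rho^2$. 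The boundary trace at $x=0$ becomes $\phi_x^2(t,0)/\rho^3$ with the favourable outflow sign. The cross terms split into three groups: those absorbed by the $\psi_x^2/\theta$ and $\vartheta_x^2/\theta^2$ dissipation already delivered by \eqref{lem3}; those involving $\tilde\rho_x$, $\tilde u_x$, $\tilde\theta_x$ and hence small via \eqref{decay1}; and a residual piece in which $\theta$ appears without a compensating $1/\theta$, which after Young's inequality is bounded by $\|\theta\|_{L^\infty}$ times quantities already under control, explaining the $\|\theta\|_{L^\infty}$ factor on the right of \eqref{lem2}.

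The principal obstacle will be the careful bookkeeping of the powers of $m_1$, $M_1$, $m_2$, $M_2$ accumulated throughout both calculations. Every Young's inequality used to absorb a cross term, every multiplier introducing a factor of $1/\rho$ or $1/\theta$, and every invocation of $P=R\rho\theta$ can raise the effective power of $\Xi$; verifying that no more than ten powers of each bound are needed, so that the single smallness condition $\Xi\delta\le\epsilon_0$ from \eqref{lem1} suffices to absorb all remainder terms, is the delicate step. A secondary point is to check that both boundary traces at $x=0$ indeed assemble into the non-negative combinations appearing on the left of \eqref{lem2} and \eqref{lem3}; this is guaranteed by the outflow hypothesis $u_-<0$ together with the Dirichlet condition $(\psi,\vartheta)(t,0)=(0,0)$.
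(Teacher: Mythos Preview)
Your overall architecture is right, but there is a structural gap in how you order the two estimates. You propose to close \eqref{lem3} first, saying the remainder $\mathcal{R}$ is ``controlled term-by-term through Cauchy's inequality, the exponential decay \eqref{decay1}, and the smallness assumption $\Xi\delta\le\epsilon_0$''. This does not work for the $\phi$-dependent pieces of $\mathcal{R}$: terms such as $\int_0^t\!\int_{\mathbb{R}_+}|\tilde u_x|\,\phi^2$ are, after using \eqref{decay1}, at best $C\delta\int_0^t\!\int_{\mathbb{R}_+}e^{-cx}\phi^2$. Bounding this by $C(m_1,M_1)\delta\int_0^t\!\int\rho\mathcal{E}$ only yields a Gronwall inequality with constant growing like $e^{C\delta t}$, which is useless for a uniform-in-$t$ estimate. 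No choice of $\epsilon_0$ fixes this.

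The paper resolves the issue by a Poincar\'e-type inequality (due to Nikkuni--Kawashima),
\[
|\varphi(t,x)|\le |\varphi(t,0)|+\sqrt{x}\,\|\varphi_x(t)\|,
\]
applied to $\phi,\psi,\vartheta$. Combined with the exponential weight $e^{-cx}$ from \eqref{decay1}, this converts each $\int|\tilde u_x|\,\phi^2$ into $\delta\bigl(|\phi(t,0)|^2+\|\phi_x(t)\|^2\bigr)$. The first summand is absorbed by the good boundary term $\rho\Phi(\tilde\rho/\rho)(t,0)$ you already identified; the second summand, however, forces you to \emph{couple} the entropy estimate with the $\phi_x$-estimate rather than prove \eqref{lem3} first. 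Concretely, the paper obtains an intermediate inequality for the entropy quantities with a residual $\delta\int_0^t\|\phi_x\|^2$ on the right, derives the $\phi_x$-identity (your Bresch--Desjardins step, which is essentially the paper's combination of $\phi_x/\rho^3$ times $\partial_x\eqref{per}_1$ with $\phi_x/(\mu\rho^2)$ times $\eqref{per}_2$), and then substitutes each into the other so that the $\theta\phi_x^2/\rho^2$ dissipation absorbs the residual $\delta\|\phi_x\|^2$. Only after this mutual closure do \eqref{lem2} and \eqref{lem3} follow; the appearance of $\|\theta\|_{L^\infty}$ on the right of \eqref{lem2} comes from bounding $\int\psi_x^2\le\|\theta\|_{L^\infty}\int\psi_x^2/\theta$ when feeding the entropy dissipation into the $\phi_x$-estimate. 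Your plan needs to incorporate this Poincar\'e step and the two-way coupling explicitly.
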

\begin{proof}
By a straightforward calculation, we find
  \begin{equation}\label{pro2.1.1}
  (\rho \mathcal{E})_t+\left[\rho u\mathcal{E}+\psi(P-\tilde{P})-\mu\psi\psi_x
  -\kappa\frac{\vartheta\vartheta_x}{\theta}\right]_x
  +\mu\frac{\tilde{\theta}\psi_x^2}{\theta}
  +\kappa\frac{\tilde{\theta}\vartheta_x^2}{\theta^2}=\mathcal{R},
  \end{equation}
  where
  \begin{equation*}\label{R}
    \begin{aligned}
    \mathcal{R}=~&\rho\tilde{\theta}_x\left[R\tilde{u}\Phi\left(\frac{\tilde{\rho}}{\rho}\right)
    +c_v\tilde{u}\Phi\left(\frac{\theta}{\tilde{\theta}}\right)
    -c_v\psi\ln\left(\frac{\theta}{\tilde{\theta}}\right)
    -R\psi\ln\left(\frac{\tilde{\rho}}{\rho}\right)\right]
    -\rho\tilde{u}_x\psi^2\\[2mm]
    &-\left(c_v\tilde{\rho}\tilde{u}\tilde{\theta}_x
    +\tilde{P}\tilde{u}_x\right)\left(\frac{\phi\vartheta}{\tilde{\rho}\tilde{\theta}}
    +\frac{\vartheta^2}{\tilde{\theta}\theta}\right)
    +2\mu\frac{\tilde{u}_x\psi_x\vartheta}{\theta}
    +\kappa\frac{\tilde{\theta}_x\vartheta\vartheta_x}{\theta^2}
    -\mu\frac{\tilde{u}_{xx}\phi\psi}{\tilde{\rho}}.
    \end{aligned}
  \end{equation*}
The identities
\begin{equation*}
    \Phi(z)=\int_0^1\int_0^1\theta_1\Phi''(1+\theta_1\theta_2(z-1))d\theta_2d\theta_1
    (z-1)^2
\end{equation*}
and
\begin{equation*}
    \ln z=\int_0^1\frac{(z-1)d\theta_1}{1+\theta_1(z-1)}
\end{equation*}
imply
\begin{equation}\label{pro2.1.2}
  \Phi(z)+|\ln z|^2\lesssim (z^{-1}+1)^2(z-1)^2,
  \quad (z+1)^{-2}(z-1)^2\lesssim \Phi(z).
\end{equation}
In view of \eqref{apriori} and \eqref{pro2.1.2},
we apply Cauchy's inequality to $\mathcal{R}$ and obtain
\begin{equation*}
  \mathcal{R}
  \lesssim \left|\left(\tilde{\rho}_x,\tilde{u}_x,\tilde{\theta}_x,\tilde{u}_{xx}\right)\right|
  \left[M_1m_1^{-2}\phi^2+M_1m_2^{-2}\vartheta^2+M_1\psi^2+
  \frac{\psi_x^2}{\theta}+\frac{\vartheta_x^2}{\theta^2}\right].
\end{equation*}
Integrate \eqref{pro2.1.1} over $[0,T]\times\mathbb{R}_+$
and use the good sign of $u_-$ to derive
\begin{equation}\label{pro2.1.3}
\begin{aligned}
  \int_{\mathbb{R}_+}&\rho \mathcal{E}
  +\int_0^t\rho\Phi\left(\frac{\tilde{\rho}}{\rho}\right)
  (s,0)ds+\int_0^t\int_{\mathbb{R}_+}\left[\frac{\psi_x^2}{\theta}
  +\frac{\vartheta_x^2}{\theta^2}\right]\\
  \lesssim~&1+\int_0^t\int_{\mathbb{R}_+}
  \left|\left(\tilde{\rho}_x,\tilde{u}_x,\tilde{\theta}_x,\tilde{u}_{xx}\right)\right|
  \left[M_1m_1^{-2}\phi^2+M_1m_2^{-2}\vartheta^2+M_1\psi^2+
  \frac{\psi_x^2}{\theta}+\frac{\vartheta_x^2}{\theta^2}\right].
\end{aligned}
\end{equation}
To estimate the terms on the right-hand side of \eqref{pro2.1.3},
we use an idea in Nikkuni-Kawashima \cite{NK00MR1793167}, i.e.
a Poincar\'{e} type inequality
$$
|\varphi(t,x)|\leq|\varphi(t,0)|+\sqrt{x}\|\varphi_x(t)\|,\ \ x\in\mathbb{R}_+.
$$
Applying this inequality to $\phi$, $\psi$ and $\vartheta$, we deduce
from  \eqref{decay1} and \eqref{pro2.1.2} that
\begin{equation}\label{pro2.1.4}
\begin{split}
   \sum_{l=0,1}\int_0^t\int_{\mathbb{R}_+}&
  \left|\partial_x^k(\tilde{\rho},\tilde{u},\tilde{\theta})\right|
  \left|\partial_x^l(\phi,\psi,\vartheta)\right|^2\\
  \lesssim~& \delta\int_0^t\left[|\phi(s,0)|^2
  +\left\|\left(\phi_x,\psi_x,\vartheta_x\right)(s)\right\|^2\right] ds\\
  \lesssim~&
  M_1^2m_1^{-1}\delta\int_0^t\rho\Phi\left(\frac{\tilde{\rho}}{\rho}\right)
  (s,0)ds\\
  &+\delta\int_0^t\|\phi_x(s)\|^2ds
  +M_2^2\delta\int_0^t\int_{\mathbb{R}_+}\left[\frac{\psi_x^2}{\theta}
  +\frac{\vartheta_x^2}{\theta^2}\right]
\end{split}
\end{equation}
for each integer $k=1,2,\cdots.$
Plug \eqref{pro2.1.4} into \eqref{pro2.1.3}
to derive
\begin{equation*}
  \begin{split}
      \int_{\mathbb{R}_+}&\rho \mathcal{E}
  +\int_0^t\rho\Phi\left(\frac{\tilde{\rho}}{\rho}\right)
  (s,0)ds+\int_0^t\int_{\mathbb{R}_+}\left[\frac{\psi_x^2}{\theta}
  +\frac{\vartheta_x^2}{\theta^2}\right]\\
     \lesssim~ &
     1+m_1^{-3}M_1^{3}m_2^{-2}\delta
     \int_0^t\rho\Phi\left(\frac{\tilde{\rho}}{\rho}\right)
  (s,0)ds\\
     &+m_1^{-2}M_1m_2^{-2}\delta\int_0^t\|\phi_x(s)\|^2ds
     +m_1^{-2}M_1m_2^{-2}M_2^2
     \int_0^t\int_{\mathbb{R}_+}\left[\frac{\psi_x^2}{\theta}
  +\frac{\vartheta_x^2}{\theta^2}\right],
  \end{split}
\end{equation*}
which implies
\begin{equation}\label{pro2.1.5}
\begin{split}
  \int_{\mathbb{R}_+}&\rho \mathcal{E}
  +\int_0^t\rho\Phi\left(\frac{\tilde{\rho}}{\rho}\right)
  (s,0)ds+\int_0^t\int_{\mathbb{R}_+}\left[\frac{\psi_x^2}{\theta}
  +\frac{\vartheta_x^2}{\theta^2}\right]\\
  \lesssim~&
  1+m_1^{-2}M_1 m_2^{-2}\delta\int_0^t\|\phi_x(s)\|^2ds,
\end{split}
\end{equation}
provided \eqref{lem1} holds for a suitable small $\epsilon_0>0$.

To control the last term of \eqref{pro2.1.5},
we differentiate $\eqref{per}_1$ (first equation of \eqref{per})
with respect to $x$, and then multiply the resulting identity and
$\eqref{per}_2$ by $\frac{\phi_x}{\rho^3}$ and $\frac{\phi_x}{\mu\rho^2}$,
respectively, to discover
\begin{equation}\label{pro2.1.6}
\left[\frac{\phi_x^2}{2\rho^3}+\frac{\phi_x\psi}{\mu\rho}\right]_t
  +\left[\frac{u\phi_x^2}{2\rho^3}-\frac{\psi\phi_t}{\mu\rho}
  -\frac{\tilde{\rho}_x\psi^2}{\mu\rho}\right]_x+\frac{R\theta\phi_x^2}{\mu\rho^2}
  =Q
\end{equation}
with
\begin{equation*}
  \begin{split}
     Q=~&\frac{\psi_x^2}{\mu}-\frac{R\phi_x\vartheta_x}{\mu\rho}
  -\left[\frac{\tilde{u}\tilde{u}_x}{\mu\rho^2}
  +\frac{R\tilde{\theta}_x}{\mu\rho^2}+\frac{\tilde{u}_{xx}}{\rho^3}\right]\phi\phi_x
  -\left[\frac{2\tilde{\rho}_x\psi_x}{\rho^3}
  +\frac{R\tilde{\rho}_x\vartheta}{\mu\rho^2}\right]\phi_x\\
  &-\left[\frac{2\tilde{\rho}_x\psi}{\mu\rho}-\frac{\tilde{u}_x\phi}
  {\mu\rho}\right]\psi_x
  -\left[\frac{\tilde{u}_x}{\mu\rho}
  -\frac{\tilde{\rho}\tilde{u}_x}{\mu\rho^2}+\frac{\tilde{\rho}_{xx}}{\rho^3}\right]
  \psi\phi_x
  -\frac{\tilde{\rho}_x\tilde{u}_x\phi\psi}{\mu\rho^2}
  -\frac{\tilde{\rho}_{xx}\psi^2}{\mu\rho}.
  \end{split}
\end{equation*}
We integrate \eqref{pro2.1.6} over $[0,t]\times\mathbb{R}_+$ and use
Cauchy's inequality
to have
\begin{equation}\label{pro2.1'}
  \int_{\mathbb{R}_+}\frac{\phi_x^2}{\rho^3}
+\int_0^t\frac{\phi_x^2}{\rho^3}(s,0)
+\int_0^t\int_{\mathbb{R}_+}\frac{\theta\phi_x^2}{\rho^2}
\lesssim
1+\int_{\mathbb{R}_+}\rho\psi^2
+\int_0^t\int_{\mathbb{R}_+}|Q|.
\end{equation}
Cauchy's inequality yields the bound
\begin{equation*}
  |Q|\lesssim
  \psi_x^2+C(\epsilon)\frac{\vartheta_x^2}{\theta}+
  \epsilon\frac{\theta\phi_x^2}{\rho^2}
  +m_1^{-3}
  | (\tilde{\rho}_x,\tilde{u}_x,\tilde{\theta}_x,
  \tilde{\rho}_{xx},\tilde{u}_{xx})|
  |(\phi,\psi,\vartheta,\phi_x,\psi_x)|^2.
\end{equation*}
Plugging this inequality into \eqref{pro2.1'},
we take $\epsilon>0$ suitable small  and utilize
\eqref{pro2.1.4} and
\eqref{pro2.1.5} to obtain
\begin{equation*}\label{pro2.1.7}
\begin{aligned}
\int_{\mathbb{R}_+}&\frac{\phi_x^2}{\rho^3}
+\int_0^t\frac{\phi_x^2}{\rho^3}(s,0)
+\int_0^t\int_{\mathbb{R}_+}\frac{\theta\phi_x^2}{\rho^2}\\
\lesssim~&1
+\int_0^t\int_{\mathbb{R}_+}\left[
\psi_x^2+\frac{\vartheta_x^2}{\theta}\right]
+m_1^{-3}M_1^3m_2^{-2}M_2^2\delta\int_0^t\|\phi_x(s)\|^2ds\\
&+m_1^{-4}M_1^2M_2^2\delta
\left[\int_0^t\rho\Phi\left(\frac{\tilde{\rho}}{\rho}\right)
  (s,0)ds+\int_0^t\int_{\mathbb{R}_+}\left(\frac{\psi_x^2}{\theta}
  +\frac{\vartheta_x^2}{\theta^2}\right)\right]\\
\lesssim~&1
+\int_0^t\int_{\mathbb{R}_+}\left[
\psi_x^2+\frac{\vartheta_x^2}{\theta}\right]
+
 m_1^{-10}M_1^{10}m_2^{-10}M_2^{10}\delta \int_0^t\int_{\mathbb{R}_+}\frac{\theta\phi_x^2}{\rho^2}.
\end{aligned}
\end{equation*}
Taking  $\epsilon_0>0$ small enough,
we can derive from \eqref{lem1} that
\begin{equation}\label{pro2.1.8}
  \int_{\mathbb{R}_+}\frac{\phi_x^2}{\rho^3}
+\int_0^t\frac{\phi_x^2}{\rho^3}(s,0)
+\int_0^t\int_{\mathbb{R}_+}\frac{\theta\phi_x^2}{\rho^2}
\lesssim 1+\int_0^t\int_{\mathbb{R}_+}\left[
\psi_x^2+\frac{\vartheta_x^2}{\theta}\right].
\end{equation}
The estimate \eqref{lem2}
follows by
plugging \eqref{pro2.1.5} into \eqref{pro2.1.8}
under the condition \eqref{lem1} for a sufficiently small $\epsilon_0>0$.
We plug \eqref{lem2} into \eqref{pro2.1.5} to deduce the estimate \eqref{lem3}.
The proof of the lemma is completed.
\end{proof}

\subsection{Pointwise estimates of density}
Once the basic energy estimate \eqref{lem3} is obtained,
we can proceed to deduce the positive lower and upper bounds
of the density uniformly in time.
For this purpose, it is easier to consider the problem
in the Lagrangian coordinate than in the Eulerian coordinate.
We introduce the Lagrangian variable
\begin{equation}\label{y}
  y=-u_-\int_{0}^{t}\rho(s,0)ds+\int_{0}^{x}\rho(t,z)dz,
\end{equation}
and define
$(\hat{\rho},\hat{u},\hat{\theta})(t,y):=({\rho},{u},{\theta})(t,x)$.
By the coordinate change $(t,x)\mapsto(t,y)$, the domain
$[0,T]\times\mathbb{R}_+$ is mapped into
$$\Omega_T:=\{(t,y):0\leq t\leq T, y>Y(t)\},$$ where
$$Y(t):=-u_-\int_{0}^{t}\rho(s,0)ds.$$
The outflow problem \eqref{NS_E}-\eqref{bdy} can be transformed into
the problem in the Lagrangian coordinate:
\begin{equation}\label{Lag}
\left\{
  \begin{aligned}
    v_{t}-u_y&=0,\qquad\qquad\qquad\qquad y>Y(t),\\[2mm]
    u_{t}+P_y&=\left(\frac{\mu u_y}{v}\right)_y, \\
    \left(c_v\theta+\frac{u^2}{2}\right)_{t}+(Pu)_y&=\left(\frac{\kappa\theta_y}{v}
    +\frac{\mu uu_y}{v}\right)_y,\\
    (u,\theta)|_{y=Y(t)}&=(u_-,\theta_-),\\[3mm]
    (v,u,\theta)|_{t=0}&=(v_0,u_0,\theta_0),
  \end{aligned}\right.
\end{equation}
where $v={1}/{\rho}$ stands for the specific volume of the gas.
We drop the hats in the formula in this subsection for
simplicity of notation.
The basic energy estimate \eqref{lem3} in Eulerian coordinate
can be transformed into a corresponding estimate in Lagrangian coordinate
as a corollary of Lemma \ref{lem_bas}.
\begin{corollary}
If \eqref{lem1} holds for a sufficiently small $\epsilon_0>0$, then
\begin{equation}\label{est_El}
  \sup_{0\leq t\leq T}\int_{Y(t)}^{\infty}{\mathcal{E}} dy
    +\int_0^{T}\int_{Y(t)}^{\infty}\left[\frac{\psi_y^2}{v \theta}
  +\frac{\vartheta_y^2}{v\theta^2}\right]
  \lesssim 1.
\end{equation}
\end{corollary}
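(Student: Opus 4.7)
The plan is to obtain \eqref{est_El} directly from the estimate \eqref{lem3} of Lemma \ref{lem_bas} by pushing it forward through the Eulerian-to-Lagrangian change of variables \eqref{y}. First I would record the elementary transformation rules: at fixed $t$, the map $x\mapsto y$ defined in \eqref{y} has Jacobian $\partial y/\partial x = \rho$, so $dy = \rho\,dx$ and $\partial_x = \rho\,\partial_y = v^{-1}\partial_y$. Hence $\psi_x = \psi_y/v$ and $\vartheta_x = \vartheta_y/v$, while the ray $\{x>0\}$ corresponds to $\{y > Y(t)\}$, since the boundary $x=0$ maps to $y=Y(t)=-u_-\int_0^t \rho(s,0)\,ds$.

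Next, I would transform the three integrals appearing on the left-hand side of \eqref{lem3} one at a time. Since $\rho v = 1$, the energy term becomes
\begin{equation*}
\int_{\mathbb{R}_+}\rho\mathcal{E}\,dx
= \int_{Y(t)}^{\infty}\rho\mathcal{E}\,v\,dy
= \int_{Y(t)}^{\infty}\mathcal{E}\,dy.
\end{equation*}
For the dissipation terms, substituting $\psi_x = \psi_y/v$ and using $dx = v\,dy$ yields
\begin{equation*}
\int_{\mathbb{R}_+}\frac{\psi_x^2}{\theta}\,dx
= \int_{Y(t)}^{\infty}\frac{\psi_y^2}{v^2\theta}\,v\,dy
= \int_{Y(t)}^{\infty}\frac{\psi_y^2}{v\theta}\,dy,
\end{equation*}
and analogously $\int_{\mathbb{R}_+}\vartheta_x^2/\theta^2\,dx$ transforms into $\int_{Y(t)}^\infty \vartheta_y^2/(v\theta^2)\,dy$. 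The boundary dissipation $\int_0^t \rho\Phi(\tilde\rho/\rho)(s,0)\,ds$ on the left of \eqref{lem3} is nonnegative, so it may simply be discarded when bounding the remaining quantities from above.

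Assembling these identities and taking the supremum over $0\le t\le T$ of the (now time-independent on the right) bound \eqref{lem3} produces exactly \eqref{est_El}, the estimate remaining valid under the hypothesis \eqref{lem1} with the same $\epsilon_0$. I do not anticipate any real obstacle: the only points requiring mild care are tracking the moving left endpoint $Y(t)$ of the domain in Lagrangian coordinates and verifying that the Jacobian factor $v$ combines correctly with the weights $1/\theta$ and $1/\theta^2$ to give the weights $1/(v\theta)$ and $1/(v\theta^2)$ that appear on the right-hand side of \eqref{est_El}, both of which follow immediately from $\rho v = 1$ and the definition \eqref{y}.
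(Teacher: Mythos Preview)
Your proposal is correct and matches the paper's approach: the paper states this result as an immediate corollary of Lemma~\ref{lem_bas}, obtained by transforming \eqref{lem3} into Lagrangian coordinates via \eqref{y}, which is precisely the computation you carry out. The paper does not spell out the details, so your explicit verification of the Jacobian identities $dy=\rho\,dx$, $\partial_x=v^{-1}\partial_y$, and the resulting transformation of each integrand is exactly what is implicitly intended.
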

Note that the function $Y(t)$ describing the boundary in the Lagrangian coordinate
is part of the unknown, i.e. the problem \eqref{Lag} is a free boundary problem.
To obtain the uniform bounds of the specific volume $v$
for the free boundary problem \eqref{Lag},
we introduce
\begin{equation} \label{Omegai}
  \Omega_{i}(t):=
  \begin{cases}
    [Y(t),[Y(t)]+2],\quad & i=[Y(t)]+1,\\
    [i,i+1],\quad &i\geq[Y(t)]+2
  \end{cases}
\end{equation}
for any integer $i\geq[Y(t)]+1$.
Based on  the basic energy estimate \eqref{est_El}, we have the following lemma.
 \begin{lemma} \label{lem_v1}
 Suppose that
 \eqref{lem1} holds for a sufficiently small $\epsilon_0>0$.
 Then there exists a constant $C_0>0$, which depends only on
   $\inf_{\mathbb{R}_+}(\rho_0,\theta_0)$
  and $\|(\phi_0,\psi_0,\vartheta_0)\|_1$, such that
 for
 all pair $(s,t)$ with $0\leq s\leq t\leq T$
and  integer $i\geq[Y(t)]+1$,
 \begin{equation}\label{bound1}
   C_0^{-1}\leq \int_{\Omega_i(t)}{v}(s,y)dy,
    \int_{\Omega_i(t)}{\theta}(s,y)dy\leq C_0,
 \end{equation}
 and there are points $a_i(s,t),b_i(s,t)\in\Omega_i(t)$
    such that
    \begin{gather} \label{bound2}
       C_0^{-1}\leq v(s,a_i(s,t)) , \theta(s,b_i(s,t))\leq C_0.
   \end{gather}
  \end{lemma}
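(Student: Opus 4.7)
The plan is to deduce both parts of the lemma from the Lagrangian entropy bound \eqref{est_El} by exploiting the convexity and coercivity of $\Phi(z)=z-\ln z-1$. The key structural observation is that the density $\mathcal{E}$ contains the relative-entropy contributions $R\tilde{\theta}\Phi(\tilde{\rho}/\rho)$ and $c_v\tilde{\theta}\Phi(\theta/\tilde{\theta})$, with $\Phi$ convex on $(0,\infty)$ and blowing up both at $0$ and at $\infty$.

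First, from \eqref{decay1} and the smallness of $\delta$ I get uniform pointwise bounds $0<c_\star\le\tilde{\rho},\tilde{\theta}\le C_\star$ on $\mathbb{R}_+$, and these bounds are preserved under the change of coordinates to $(t,y)$. Combining them with \eqref{est_El} yields, uniformly in $s\in[0,T]$,
$$
\int_{Y(s)}^{\infty}\!\Bigl[\Phi\!\bigl(\tfrac{\tilde{\rho}}{\rho}\bigr)+\Phi\!\bigl(\tfrac{\theta}{\tilde{\theta}}\bigr)\Bigr](s,y)\,dy\lesssim 1.
$$
Because $u_-<0$ and $\rho>0$, the free boundary $Y$ is nondecreasing, so $\Omega_i(t)\subset[Y(s),\infty)$ whenever $s\le t$ and $i\ge[Y(t)]+1$, and moreover $|\Omega_i(t)|\in[1,2]$. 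Jensen's inequality applied to the convex $\Phi$ on each such strip gives
$$
\Phi\!\left(\tfrac{1}{|\Omega_i(t)|}\!\int_{\Omega_i(t)}\!\tfrac{\tilde{\rho}}{\rho}(s,y)\,dy\right)\lesssim 1,
$$
and the coercivity of $\Phi$ at $0$ and $\infty$ forces this average to lie in $[C^{-1},C]$ for some $C>0$ independent of $(s,t,i)$. Using $\tilde{\rho}\in[c_\star,C_\star]$ to translate from $\tilde{\rho}/\rho$ to $v=1/\rho$ delivers $C_0^{-1}\le\int_{\Omega_i(t)}v(s,y)\,dy\le C_0$, and the analogous argument with $\Phi(\theta/\tilde{\theta})$ produces the corresponding bound on $\int_{\Omega_i(t)}\theta(s,y)\,dy$, giving \eqref{bound1}.

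For \eqref{bound2} I invoke the mean-value theorem for integrals: by Proposition \ref{Pro_loc} and the $H^1\hookrightarrow C$ embedding, $v(s,\cdot)$ and $\theta(s,\cdot)$ are continuous on $\Omega_i(t)$, so the averages
$$
\bar{v}_i(s):=\tfrac{1}{|\Omega_i(t)|}\!\int_{\Omega_i(t)}\!v(s,y)\,dy,\qquad\bar{\theta}_i(s):=\tfrac{1}{|\Omega_i(t)|}\!\int_{\Omega_i(t)}\!\theta(s,y)\,dy
$$
are attained at some $a_i(s,t),b_i(s,t)\in\Omega_i(t)$. Since $|\Omega_i(t)|\in[1,2]$, these attained values lie in $[\tfrac12 C_0^{-1},C_0]$, which is \eqref{bound2} after relabeling the constant.

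The argument is essentially routine once \eqref{est_El} is in hand; the one point demanding care is verifying that $\Omega_i(t)\subset[Y(s),\infty)$ for every $s\le t$, so that the restriction of the global entropy integral to the strip is legitimate---this relies on the monotonicity of $Y$, which is precisely where the outflow hypothesis $u_-<0$ enters. The smallness of $\delta$ is also used implicitly, through the uniform two-sided bounds on $\tilde{\rho}$ and $\tilde{\theta}$ needed to convert between relative quantities and $v,\theta$ themselves.
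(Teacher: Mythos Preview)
Your proposal is correct and follows essentially the same route as the paper: both restrict the entropy bound \eqref{est_El} to the strip $\Omega_i(t)\subset[Y(s),\infty)$ (using the monotonicity of $Y$ from $u_-<0$), apply Jensen's inequality to the convex function $\Phi$ to trap the averages of $v/\tilde{v}=\tilde{\rho}/\rho$ and $\theta/\tilde{\theta}$ between two positive roots of $\Phi(z)=C$, and then invoke the mean value theorem for \eqref{bound2}. Your write-up is in fact slightly more careful than the paper's in justifying continuity for the mean value step and in tracking the factor $|\Omega_i(t)|\in[1,2]$.
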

\begin{proof}
  Let $0\leq s\leq t\leq T$ and $i\geq[Y(t)]+1$.
  From the definition of $Y(t)$ and the sign of $u_-$,
  we have $Y(s)\leq Y(t)$ and $\Omega_{i}(t)\subset [Y(s),\infty).$
  In view of \eqref{est_El}, we get
  \begin{equation*}
    \int_{\Omega_i(t)}\Phi\left(\frac{v}{\tilde{v}}\right)(s,y)dy+
    \int_{\Omega_i(t)}\Phi\left(\frac{\theta}{\tilde{\theta}}\right)(s,y)dy
    \lesssim 1.
  \end{equation*}
  Apply Jensen's inequality to
the convex function $\Phi$ to obtain
  \begin{equation*}
     \Phi\left(\frac{1}{|\Omega_i(t)|}\int_{\Omega_i(t)}\frac{v}{\tilde{v}}(s,y)dy\right)+
    \Phi\left(\frac{1}{|\Omega_i(t)|}\int_{\Omega_i(t)}\frac{\theta}{\tilde{\theta}}
    (s,y)dy \right)\leq C.
  \end{equation*}
  Let $\alpha$ and $\beta$ be the two positive roots of the equation $\Phi(z)=C$.
  Then we have
  \begin{equation*}
    \alpha\leq
    \frac{1}{|\Omega_i(t)|}\int_{\Omega_i(t)}\frac{v}{\tilde{v}}(s,y)dy,\
    \frac{1}{|\Omega_i(t)|}\int_{\Omega_i(t)}\frac{\theta}{\tilde{\theta}}
    (s,y)dy
    \leq \beta.
  \end{equation*}
  These estimates imply \eqref{bound1}.
  Finally we employ the mean value theorem
 to \eqref{bound1} to find $a_i(s,t),b_i(s,t)\in\Omega_i(t)$ satisfying
  \eqref{bound2}.
  The proof of the lemma is completed.
\end{proof}
We deduce a local representation of $v$ in the next lemma
by modifying Jiang's argument in  \cite{J99MR1671920,J02MR1912419} for
fixed domains.
To this end, we introduce the cutoff function
  $\varphi_z\in W^{1,\infty}(\mathbb{R})$ with
parameter $z\in\mathbb{R}$ by
  \begin{equation} \label{varphi}
  \varphi_z(y)=
  \begin{cases}
    1,\quad& y<[z]+4,\\
    [z]+5-y,\quad& [z]+4\leq y< [z]+5,\\
    0,\quad& y\geq [z]+5.
  \end{cases}
\end{equation}
\begin{lemma}
  Let $(\tau,z)\in\Omega_T$. Then we have
\begin{equation}\label{v_form}
  v(t,y)=B_z(t,y)A_z(t)+\frac{R}{\mu}\int_0^t
  \frac{B_z(t,y)A_z(t)}{B_z(s,y)A_z(s)}\theta(s,y)ds
\end{equation}
for all $t\in[0,\tau]$ and $y\in
I_z(\tau):=(Y(\tau),\infty)\cap([z]-1,[z]+4)$, where
  \begin{align}   \label{B}
    B_z(t,y)&:=v_0(y)\exp\left\{\frac{1}{\mu}
    \int_{y}^{\infty}
        \left(u_0(\xi)-u(t,\xi)\right)\varphi_z(\xi)d\xi\right\},\\   \label{Y}
    A_z(t)&:=\exp\left\{\frac{1}{\mu}\int_0^t
    \int_{[z]+4}^{[z]+5}\left(\frac{\mu u_y}{v}-P\right)d\xi ds\right\}.
  \end{align}
\end{lemma}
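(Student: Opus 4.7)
The plan is to derive the representation by adapting Jiang's cutoff-integration trick to the half-line Lagrangian setting. Fix $(\tau,z)\in\Omega_T$ and $y\in I_z(\tau)$. Because $Y(t)=-u_-\int_0^t\rho(s,0)\,ds$ is nondecreasing in $t$ (since $u_-<0$), one has $Y(t)\le Y(\tau)<y$ for all $t\in[0,\tau]$, so the interval $[y,\infty)$ lies inside the fluid region $\{\xi>Y(t)\}$ where the Lagrangian system \eqref{Lag} is valid; moreover $y<[z]+4$ means $\varphi_z(y)=1$ and $\varphi_z'$ is supported on $([z]+4,[z]+5)$ with constant value $-1$. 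I would multiply the momentum equation $u_t+P_\xi=(\mu u_\xi/v)_\xi$ by $\varphi_z(\xi)$ and integrate in $\xi$ over $[y,\infty)$.

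Integration by parts on the $P_\xi$ and $(\mu u_\xi/v)_\xi$ terms kills the boundary contributions at $\xi=\infty$ thanks to the compact support of $\varphi_z$, produces $-P(t,y)$ and $-(\mu u_y/v)(t,y)$ at $\xi=y$, and turns the interior pieces into integrals over $([z]+4,[z]+5)$. Combining with $P=R\theta/v$ and the mass equation $v_t=u_y$, which rewrites $\mu u_y/v$ as $\mu(\log v)_t$, one arrives at the pointwise-in-$y$ identity
\begin{equation*}
\mu(\log v)_t+\frac{d}{dt}\int_y^\infty (u-u_0)\varphi_z\,d\xi-\int_{[z]+4}^{[z]+5}\Bigl(\frac{\mu u_\xi}{v}-P\Bigr)d\xi=\frac{R\theta}{v}.
\end{equation*}
Integrating in $t$ from $0$ to $t$ and exponentiating, with the initial-time identifications $B_z(0,y)=v_0(y)$ and $A_z(0)=1$, yields
\begin{equation*}
v(t,y)=B_z(t,y)\,A_z(t)\,\exp\Bigl\{\frac{R}{\mu}\int_0^t\frac{\theta(s,y)}{v(s,y)}\,ds\Bigr\},
\end{equation*}
with $B_z,A_z$ given exactly by \eqref{B}--\eqref{Y}.

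To convert this into the claimed Volterra-type formula \eqref{v_form}, I would set $w(t):=v(t,y)/[B_z(t,y)A_z(t)]$ for fixed $y$. Differentiating the exponential representation gives $w_t=(R/\mu)(\theta/v)w=(R/\mu)\theta/[B_z(t,y)A_z(t)]$, a linear first-order ODE in $t$ with $w(0)=1$; solving it and multiplying through by $B_z(t,y)A_z(t)$ produces \eqref{v_form}. The main technical point to be careful about is justifying the interchange of $d/dt$ with the improper integral $\int_y^\infty u\,\varphi_z\,d\xi$ and the time differentiation of $B_z(t,y)$, but this is routine because $\varphi_z$ has compact support in $\xi$ and $(v,u,\theta)$ is smooth on $\Omega_T$ by Proposition \ref{Pro_loc}; the remainder of the argument is linear ODE manipulation.
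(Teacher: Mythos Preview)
Your proposal is correct and follows essentially the same route as the paper: multiply the Lagrangian momentum equation by $\varphi_z$, integrate over $[y,\infty)$ using $\varphi_z(y)=1$ and $\varphi_z'=-1$ on $([z]+4,[z]+5)$, then integrate in time and exponentiate to obtain $v=B_zA_z\exp\{(R/\mu)\int_0^t\theta/v\,ds\}$. The paper's final step---multiplying the reciprocal identity by $R\theta/\mu$ and integrating---is exactly your ODE computation for $w=v/(B_zA_z)$ in different clothing.
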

\begin{proof}
We  multiply $\eqref{Lag}_2$ by $\varphi_z$ to get
 \begin{equation} \label{ident1}
   (\varphi_z u)_t=\left[\left(\mu\frac{u_y}{v}-P\right)\varphi_z\right]_y
   -\varphi_z'\left(\mu\frac{u_y}{v}-P\right).
 \end{equation}
Let $(t,y)\in[0,\tau]\times I_z(\tau)$.
Since  $y> 
Y(s)$ for each $s\in [0,\tau]$, we have
$[0,\tau]\times[y,\infty)\subset\Omega_T$.
In view of the identity $\varphi_z(y)=1$ and \eqref{Lag}$_1$,
we integrate \eqref{ident1} over
$[0,t]\times[y,\infty)$ to get
 \begin{equation*}
   -\int_y^{\infty}\varphi_z(\xi)(u(t,\xi)-u_0(\xi))d\xi
   =\mu\ln\frac{v(t,y)}{v_0(y)}-R\int_0^{t}\frac{\theta(s,y)}{v(s,y)}ds
   +\int_0^{t}\int_{[z]+4}^{[z]+5}\left(P-\mu\frac{u_y}{v}\right).
 \end{equation*}
 This implies that for each $t\in[0,\tau]$,
 \begin{equation}\label{pro4}
  \frac{1}{v(t,y)}\exp\left\{\frac{R}{\mu}\int_0^t\frac{\theta(s,y)}{v(s,y)}ds\right\}
  =\frac{1}{B_z(t,y)A_z(t)}.
\end{equation}
Multiplying \eqref{pro4} by $R\theta(t,y)/\mu$
and integrating the resulting identity over $[0, t]$, we have
\begin{equation*}
  \exp\left\{\frac{R}{\mu}\int_0^{t}\frac{\theta(s,y)}{v(s,y)}ds\right\}
  =1+\frac{R}{\mu}\int_0^{t} \frac{\theta(s,y)}{B_z(s,y)A_z(s)}ds.
\end{equation*}
We then plug this identity into \eqref{pro4} to obtain
\eqref{v_form} and complete the proof of the lemma.
\end{proof}
The following lemma
is devoted to showing
 the bounds of the specific volume $v(\tau,z)$
 uniformly in the time $\tau$ and the Lagrangian variable $z$.
\begin{lemma} \label{lem_boun}
   If \eqref{lem1} holds for a sufficiently small $\epsilon_0>0$,
   then
     \begin{equation}\label{boun_v}
   C_1^{-1}\leq v(\tau,z)\leq C_1\quad {\rm for\ all}\ (\tau,z)\in\Omega_T.
  \end{equation}
\end{lemma}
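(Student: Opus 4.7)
The plan is to exploit the local representation \eqref{v_form} to reduce uniform pointwise bounds on $v$ to bounds on the product $D_z(t,y) := B_z(t,y)A_z(t)$. Rewriting \eqref{v_form} as
$$v(t,y) = D_z(t,y)\Bigl[1 + \frac{R}{\mu}\int_0^t \frac{\theta(s,y)}{D_z(s,y)}\, ds\Bigr],$$
the non-negativity of the integrand gives the one-sided inequality $v(t,y) \geq D_z(t,y)$, so a uniform lower bound on $D_z$ yields the lower bound on $v$. For the upper bound on $v$, it will suffice to combine an upper bound on $D_z(t,y)$ with a lower bound on $D_z(s,y)$ (uniform in $s$) and integrated control of $\theta$ on compact $y$-intervals provided by Lemma \ref{lem_v1}.

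For the bounds on $B_z$: the factor $v_0 = 1/\rho_0$ is bounded above and below because $\inf_{\mathbb{R}_+}\rho_0>0$ by hypothesis and $\rho_0 - \tilde\rho \in H^1 \hookrightarrow L^\infty$ gives an upper bound on $\rho_0$. The exponent $\frac{1}{\mu}\int_y^\infty (u_0-u)\varphi_z\, d\xi$ in \eqref{B} is supported in $[y,[z]+5]$, an interval of length at most $6$, so Cauchy--Schwarz reduces it to $C(\|u_0-\tilde u\|_{L^2} + \|u(t)-\tilde u\|_{L^2})$, which is uniformly bounded by the Lagrangian form of Lemma \ref{lem_bas}. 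I conclude $c \leq B_z(t,y) \leq C$ uniformly.

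For the bounds on $A_z$: using $v_t = u_y$ from $\eqref{Lag}_1$ one has $u_y/v = (\ln v)_t$, which rewrites the exponent in \eqref{Y} as
$$A_z(t) = \exp\Bigl\{\int_{[z]+4}^{[z]+5}\ln\frac{v(t,\xi)}{v_0(\xi)}\, d\xi\Bigr\}\cdot\exp\Bigl\{-\frac{R}{\mu}\int_0^t\int_{[z]+4}^{[z]+5}\frac{\theta}{v}\, d\xi\, ds\Bigr\}.$$
The upper bound $A_z(t) \leq C$ then follows from Jensen's inequality applied to the concave function $\ln$ (using $\int_{[z]+4}^{[z]+5} v(t,\xi)\, d\xi \leq C_0$ from Lemma \ref{lem_v1}) together with the non-positivity of the second exponent and the boundedness of $v_0$ from below. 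Combined with Step~1 this gives $D_z(t,y) \leq C$, hence an upper bound on $v$ once the integral $\int_0^t \theta(s,y)/D_z(s,y)\, ds$ is controlled, which I can do by splitting it over $[Y(t), y]$-intervals of unit length where Lemma \ref{lem_v1} furnishes points at which $\theta$ is comparable to unity and exploiting the already-known upper bound on $\int_{\Omega_i(t)}\theta\, dy$.

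The main obstacle is the lower bound on $A_z$, equivalently the lower bound on $v$. Jensen's inequality gives the wrong direction for the first exponential, and the second factor $\exp\{-(R/\mu)\iint \theta/v\}$ could be arbitrarily small a priori, since $1/v$ is precisely the object under control. I will circumvent this by using the mean-value points $a_i(s,t)$ in the intervals $\Omega_i(t)$ (from Lemma \ref{lem_v1}) to anchor $v$ and $\theta$ at each time slice to comparable-to-unity values inside $[[z]+4,[z]+5]$, and by inserting the inequality $v \geq D_z = B_z A_z$ into the integrand $\theta/v$ to obtain a Gronwall-type inequality for $A_z$ itself. Iterating this together with Step~2 closes the bootstrap and yields $c \leq A_z(t) \leq C$, thereby completing the proof of \eqref{boun_v}.
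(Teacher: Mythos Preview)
Your scheme hinges on obtaining a uniform lower bound $A_z(t)\geq c>0$, but this is impossible: $A_z(t)\to 0$ exponentially as $t\to\infty$. Indeed, your own rewriting
\[
A_z(t)=\exp\Bigl\{\int_{[z]+4}^{[z]+5}\ln\frac{v(t,\xi)}{v_0(\xi)}\,d\xi\Bigr\}
\cdot\exp\Bigl\{-\frac{R}{\mu}\int_0^t\!\!\int_{[z]+4}^{[z]+5}\frac{\theta}{v}\,d\xi\,ds\Bigr\}
\]
shows the difficulty: by Lemma~\ref{lem_v1} the $\xi$-averages of $v$ and $\theta$ over $[[z]+4,[z]+5]$ are comparable to $1$ at every time, so the second exponent behaves like $-C^{-1}t$. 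Your proposed Gronwall bootstrap, inserting $v\geq cA_z$ into $\theta/v$, only yields $A_z'(t)\geq -C$, i.e.\ $A_z(t)\geq 1-Ct$, which says nothing for large $t$. Consequently the inequality $v(t,y)\geq D_z(t,y)$ cannot furnish the uniform lower bound on $v$.

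The upper-bound part of your plan also breaks: even granting $D_z\geq c$ (which is false), the formula gives
\[
v(t,y)\leq C\Bigl[1+\frac{1}{c}\int_0^t\theta(s,y)\,ds\Bigr],
\]
and since $\theta(s,\cdot)$ is, on average over unit $y$-intervals, bounded \emph{below} by Lemma~\ref{lem_v1}, the right-hand side grows linearly in $t$. Your remark about ``splitting over $[Y(t),y]$-intervals of unit length'' addresses the spatial variable, whereas the integral to be controlled is a \emph{time} integral at a fixed $y$.

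The missing idea, which the paper supplies, is that one should not bound $A_z(t)$ itself but the \emph{ratio} $A_z(t)/A_z(s)$ for $0\leq s\leq t$: one shows
\[
\frac{A_z(t)}{A_z(s)}\leq C\,e^{-(t-s)/C},
\]
by proving $\int_s^t\!\int_{[z]+4}^{[z]+5}\bigl(\mu u_y/v-P\bigr)\leq C-C^{-1}(t-s)$ via a Jensen-type lower bound on $\int_s^t\inf_\xi\theta(t',\xi)\,dt'$. This exponential damping converts the representation \eqref{v_form} into $v(t,y)\lesssim 1+\int_0^t e^{-(t-s)/C}\theta(s,y)\,ds$, after which a pointwise estimate $\theta\lesssim 1+(\sup v)\int\vartheta_y^2/(v\theta^2)$ and Gronwall close the upper bound. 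The lower bound on $v$ for large $t$ then comes from the \emph{integral} term in \eqref{v_form}, not from $D_z$: one shows $\int_0^t A_z(t)/A_z(s)\,ds\gtrsim 1-Ce^{-t/C}$ by integrating \eqref{v_form} in $y$ and invoking \eqref{bound1}, and combines this with a lower bound on $\theta$; for small $t$ a crude $v\gtrsim e^{-Ct}$ (Kazhikhov-type) suffices.
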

\begin{proof}
Let $(\tau,z)\in\Omega_T$ be arbitrary but fixed.
The proof is divided into three steps.

\noindent{\bf Step 1}.
In view of Cauchy's inequality and \eqref{est_El}, we have
\begin{equation}\label{est_B}
  B_z(t,y)\sim1 \quad {\rm for\ all}\ (t,y)\in[0,\tau]\times I_z(\tau).
\end{equation}
Let $0\leq s\leq t\leq \tau$.
Apply
Cauchy's inequality and Jensen's inequality
for the convex function $1/x$ $(x>0)$,
\eqref{decay1}, \eqref{est_El} and \eqref{bound1} to deduce
\begin{equation}\label{pro5}
    \begin{aligned}
      &\int_s^{t}
    \int_{[z]+4}^{[z]+5}\left[\frac{\mu u_y}{v}-P\right]\\
        & \leq C \int_s^{t}\int_{[z]+4}^{[z]+5}\frac{u_y^2}{v\theta}
        +\frac R2\int_s^{t}\int_{[z]+4}^{[z]+5}\frac{\theta}{v}
        -\int_s^t\int_{[z]+4}^{[z]+5} \frac{R\theta}{v} \\[1.5mm]
        &\leq C \int_s^{t}\int_{[z]+4}^{[z]+5}\left[\frac{\psi_y^2}{v\theta}
        +\frac{\tilde{u}_y^2}{v\theta}\right]
        -\frac R2\int_s^{t}\int_{[z]+4}^{[z]+5}\frac{\theta}{v}\\
        &\leq C+C M_1   m_2^{-1}\delta ({t}-s)
        -\frac{R}{2}\int_s^{t} \inf_{([z]+4,[z]+5)}\theta(t',\cdot)
        \left[\int_{[z]+4}^{[z]+5}v\right]^{-1}d t'\\
        &\leq C+C\epsilon_0 (t-s)
        -C^{-1}\int_s^{t} \inf_{([z]+4,[z]+5)}\theta(t',\cdot)d t',
    \end{aligned}
  \end{equation}
For each $0\leq t' \leq t$, there exists $y(t')\in\left[[z]+4,[z]+5\right]$
such that
\begin{equation*} \label{pro12}
  \theta(t',y(t' ))=\inf_{([z]+4,[z]+5)}\theta(t' ,\cdot),
\end{equation*}
Since $[z]+4\geq [Y(t')]+2$, we derive
from the definition \eqref{Omegai} that
$
  \Omega_{[z]+4}(t')=\left[[z]+4,[z]+5\right].
$
We then apply H\"{o}lder's and Cauchy's inequalities to obtain from
\eqref{decay1}, Lemma \ref{lem_v1} and \eqref{est_El} that
\begin{equation}\label{pro10}
\begin{split}
  &\left|\int_s^{t}\int_{b_{[z]+4}(t',t')}^{y(t')}
\frac{\theta_y}{\theta}(t' ,\xi)d\xi dt'
\right|\\
 ~&\leq
\int_s^{t}\int_{\Omega_{[z]+4}(t')}
\left|\frac{\tilde\theta_y}{\theta}(t',\xi)+
\frac{\vartheta_y}{\theta}(t' ,\xi)\right|d\xi dt' \\
~&\leq
m_2^{-1}\delta({t}-s)
+\int_s^{t}\left|\int_{Y(t')}^{\infty}
\frac{\vartheta_y^2}{v\theta^2}(t',\xi)d\xi\right|^{\frac{1}{2}}
\left|\int_{\Omega_{[z]+4}(t')} v (t',\xi)d\xi\right|^{\frac{1}{2}}dt' \\
~&\leq C (t-s) +C \int_s^t\int_{Y(t')}^{\infty}
\frac{\vartheta_y^2}{v\theta^2}(t',\xi)d\xi dt'  \\
~&\leq C (t-s) +C.
\end{split}
\end{equation}
Applying Jensen's inequality to the convex function $e^{x}$,
we have from 
\eqref{bound2} and \eqref{pro10} that
\begin{equation*}\label{pro11}
  \begin{split}
   & \int_s^{t}\inf_{([z]+4,[z]+5)}\theta(t',\cdot)dt'
    =\int_s^{t}\exp\left(\ln\theta(t' ,y(t' ))\right)dt' \\
  ~&  \geq ({t}-s)\exp\left(\frac{1}{{t}-s}\int_s^{t}
    \ln\theta(t' ,y(t ' )) dt' \right)\\
 ~&   \geq  ({t}-s)\exp\left(\frac{1}{{t}-s}\int_s^{t}
\left[\int_{b_{[z]+4}(t ',t' )}^{y(t'  )}
\frac{\theta_y}{\theta}(t' ,\xi)d\xi+
\ln\theta(t' ,b_{[z]+4}(t' ,t '))\right]dt'\right)\\
~&\geq ({t}-s)\exp\left(-\ln C_0
-\frac{1}{{t}-s}\left|\int_s^{t}\int_{b_{[z]+4}(t ',t' )}^{y(t' )}
\frac{\theta_y}{\theta}(t ',\xi)d\xi dt'
\right|\right)\\
~&\geq \frac{{t}-s}{C}\exp\left({-\frac{C}{ {t}-s }}\right).
  \end{split}
\end{equation*}
This implies
\begin{equation}\label{pro6}
  -\int_s^{t} \inf_{([z]+4,[z]+5)}\theta(t' ,\cdot)dt '
  \leq
  \begin{cases}
    0, \quad\quad &{\rm for }\ 0\leq {t}-s\leq 1,\\
    -C^{-1}({t}-s)\quad\quad &{\rm for }\ {t}-s\geq 1.
  \end{cases}
\end{equation}
Plugging \eqref{pro6} into \eqref{pro5} and taking $\epsilon_0>0$ small enough,
we have  for each $s\in[0,{t}]$ that
\begin{equation*}
 \int_s^{t}
    \int_{[z]+4}^{[z]+5}\left[\frac{\mu u_y}{v}-P\right]
    \leq C-C^{-1}({t}-s).
\end{equation*}
According to the definition \eqref{Y}, we then obtain
\begin{equation}\label{pro7}
    0\leq A_z({t})\leq C\mathrm{e}^{-{t}/C},\quad
    \frac{A_z(t)}{A_z(s)}\leq C\mathrm{e}^{-({t}-s)/{C}}
    \qquad {\rm for\ all}\ 0\leq s\leq {t}\leq \tau.
  \end{equation}

  \noindent{\bf Step 2}.
   Plugging \eqref{est_B} and \eqref{pro7} into \eqref{v_form}, we infer
   \begin{equation}\label{pro2.2.30}
     \int_0^t\frac{A_z(t)}{A_z(s)}\theta(s,y)ds\lesssim
     v(t,y)\lesssim 1+\int_0^t\theta(s,y)\mathrm{e}^{-\frac{t-s}C}ds
     \end{equation}
for all $(t,y)\in [0,\tau]\times I_z(\tau)$.
In light of the fundamental theorem of calculus, we infer
from  \eqref{decay1} and \eqref{bound1} that for
 $y\in I_z(\tau)$ and $0\leq s\leq t\leq \tau$,
     \begin{equation} \label{pro14}
  \begin{aligned}
    ~&\left|\theta(s,y)^{\frac{1}{2}}-\theta(s,b_{[z]+2}(s,\tau))^{\frac{1}{2}}
    \right|\\
    ~&\lesssim
     \int_{I_z(\tau)}\theta^{-\frac12}|\theta_y|(s,\xi)d\xi\\
    ~&\lesssim \int_{I_z(\tau)}\theta^{-\frac12}\left|\tilde{\theta}_y\right
    | (s,\xi)d\xi+
    \int_{I_z(\tau)}\theta^{-\frac12}|\vartheta_y|(s,\xi)d\xi
    \\
    ~&\lesssim m_2^{-\frac{1}{2}}\delta+
    \left[\int_{I_z(\tau)}\frac{\vartheta_y^2}{v\theta^2}(s,\xi)d\xi\right]^{\frac{1}{2}}
    \left[\int_{I_z(\tau)}{v\theta}(s,\xi)d\xi\right]^{\frac{1}{2}}\\
    ~&\lesssim  m_2^{-\frac{1}{2}}\delta+
    \sup_{I_z(\tau)}v^{\frac12}(s,\cdot)
    \left[\int_{I_z(\tau)}\frac{\vartheta_y^2}{v\theta^2}(s,\xi)d\xi\right]^{\frac{1}{2}},
  \end{aligned}
  \end{equation}
  where we have used
  $b_{[z]+2}(s,\tau)\in \Omega_{[z]+2}(\tau)\subset I_z(\tau)$.
   Combine \eqref{pro14} with \eqref{bound2} and \eqref{lem1} to give
  \begin{equation}\label{pro2.2.31}
    \theta(s,y)\lesssim
    1+\sup_{I_z(\tau)}v(s,\cdot)
    \int_{I_z(\tau)}\frac{\vartheta_y^2}{v\theta^2}(s,\xi)d\xi
  \end{equation}
  and
  \begin{equation}\label{pro17}
    1-C
    \sup_{I_z(\tau)}v(s,\cdot)
    \int_{I_z(\tau)}\frac{\vartheta_y^2}{v\theta^2}(s,\xi)d\xi
    \lesssim \theta(s,y).
  \end{equation}
  We plug \eqref{pro2.2.31} into \eqref{pro2.2.30} to obtain
  \begin{equation*}
  \begin{aligned}
    v(t,y)\lesssim 1+\int_0^t\sup_{I_z(\tau)}v(s,\cdot)\int_{I_z(\tau)}
    \frac{\vartheta_y^2}{v\theta^2}(s,\xi)d\xi ds.
  \end{aligned}
  \end{equation*}
  Taking the supremum over $I_z(\tau)$ with respect to $y$, we have
 \begin{equation} \label{pro13}
  \sup_{I_z(\tau)}v(t,\cdot)\lesssim 1+\int_0^t
  \sup_{I_z(\tau)}v(s,\cdot)\int_{\Omega_i(\tau)}
    \frac{\vartheta_y^2}{v\theta^2}(s,\xi)d\xi ds.
 \end{equation}
 Applying Gronwall's inequality to \eqref{pro13},
 we can deduce from \eqref{est_El}
 that
 \begin{equation}\label{pro16}
  \sup_{I_z(\tau)} v (t,\cdot)\leq C_1\quad {\rm for\ all}\ t\in[0,\tau],
 \end{equation}
 where
 $C_1>0$ is some constant independent of $t$, $\tau$ and $z$.
 Noting that
 $z\in I_z(\tau)$, we deduce from \eqref{pro16} that
 $v(\tau,z)\leq C_1$. Since
 $(\tau,z)\in \Omega_T$ is arbitrary, we conclude
 \begin{equation}\label{upper1}
      v (\tau,z)\leq C_1 \quad {\rm for\ all}\ (\tau,z)\in\Omega_T.
  \end{equation}

\noindent{\bf Step 3}.
On the other hand, in view of \eqref{bound1}, \eqref{est_B} and \eqref{pro7},
we integrate \eqref{v_form} on $I_z(\tau)$
with respect to $y$ to find
  \begin{equation*}
    \begin{aligned}
      1\lesssim\int_{I_z(\tau)}v(t,y)dy\lesssim \mathrm{e}^{-t/C}
      +\int_0^t\frac{A_z(t)}{A_z(s)}ds.
    \end{aligned}
  \end{equation*}
Consequently, we have
\begin{equation}\label{Y-int}
  \int_0^t\frac{A_z(t)}{A_z(s)}ds\gtrsim 1-C\mathrm{e}^{-{t}/{C }}.
\end{equation}
Inserting \eqref{pro17}, \eqref{upper1} and
\eqref{Y-int} into \eqref{pro2.2.30}, we have
  \begin{equation}\label{pro9}
  \begin{aligned}
    v(t,y)
    \gtrsim~& \int_{0}^t\frac{A_z(t)}{A_z(s)}ds
    -C \int_{0}^t\frac{A_z(t)}{A_z(s)}\int_{I_z(\tau)}
\frac{\vartheta_y^2}{v\theta^2}d\xi ds\\
    \gtrsim~& 1-C \mathrm{e}^{-{t}/{C }}-
    C \left(\int_{0}^{\frac{t}2}+\int_{\frac{t}2}^t\right)
    \frac{A_z(t)}{A_z(s)}\int_{I_z(\tau)}
    \frac{\vartheta_y^2}{v\theta^2}d\xi ds
    \\
    \gtrsim~& 1-C \mathrm{e}^{-t/C }
    -C \int_{0}^{\frac{t}2}\mathrm{e}^{-\frac{t-s}{C}}
    \int_{I_z(\tau)}\frac{\vartheta_y^2}{v\theta^2}d\xi ds
    -C \int_{\frac{t}2}^t\int_{I_z(\tau)}\frac{\vartheta_y^2}{v\theta^2}d\xi  ds\\
    \gtrsim~& 1-C \mathrm{e}^{-t/C }
    -C \mathrm{e}^{-\frac{t}{2C}}
    -C \int_{\frac{t}2}^t\int_{I_z(\tau)}\frac{\vartheta_y^2}{v\theta^2}d\xi ds\\
    \gtrsim~& 1  \quad {\rm for\ all}\ (t,y)\in [T_0,\tau]\times I_z(\tau),
  \end{aligned}
\end{equation}
where $T_0$ is a positive constant independent of $t$.
In particular, the estimate \eqref{pro9} implies
\begin{equation}\label{pro15}
  v(\tau,z)\gtrsim 1 \quad {\rm for\ all}\ \tau\geq T_0,\ z>Y(\tau).
\end{equation}
As in \cite{KaMR651877,KS77MR0468593}, we can derive a positive lower bound for $v$,
that is,
  \begin{equation}\label{v_low1}
    v(\tau,z)\gtrsim e^{-Ct}, \quad {\rm for}\ (\tau,z) \in \Omega_T.
  \end{equation}
Finally, we combine \eqref{v_low1},
\eqref{upper1} and \eqref{pro15} to get \eqref{boun_v}.
This completes the proof.
\end{proof}

As a corollary of Lemma \ref{lem_boun}, we get the uniform bounds for the density
$\rho$ in the Eulerian coordinate.
\begin{corollary}
  If \eqref{lem1} holds for a sufficiently small $\epsilon_0>0$,
   then
  \begin{equation}\label{boun_rho}
   C_1^{-1}\leq \rho(t,x)\leq C_1\quad {\rm for\ all}\ (t,x)\in[0,T]\times\mathbb{R}_+,
  \end{equation}
   where the positive constant $C_1$ depends solely
on $\inf_{\mathbb{R}_+}(\rho_0,\theta_0)$
  and $\|(\phi_0,\psi_0,\vartheta_0)\|_1$.
\end{corollary}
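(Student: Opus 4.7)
The plan is to observe that this corollary is essentially a translation of Lemma~\ref{lem_boun} from the Lagrangian coordinate back to the Eulerian coordinate via the change of variables defined in \eqref{y}. There is no real obstacle here because all the analytical work has been done in the Lagrangian frame; what remains is only a verification that the coordinate transformation is a bijection between $[0,T]\times\mathbb{R}_+$ and $\Omega_T$ and that $v$ transforms to $1/\rho$.

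First, I would fix any $t\in[0,T]$ and consider the map
\[
x\mapsto y(t,x)=-u_-\int_0^t\rho(s,0)\,ds+\int_0^x\rho(t,z)\,dz.
\]
Since $\rho>0$ on the a~priori level \eqref{apriori}, the map $x\mapsto y(t,x)$ is smooth and strictly increasing, with $y(t,0)=Y(t)$ and $y(t,x)\to\infty$ as $x\to\infty$ (this last point uses that $\rho$ is bounded away from zero on the a~priori interval, together with the fact that the range is connected). Hence for each $t\in[0,T]$ the map is a bijection from $\mathbb{R}_+$ onto $(Y(t),\infty)$, and therefore the map $(t,x)\mapsto(t,y(t,x))$ is a bijection from $[0,T]\times\mathbb{R}_+$ onto $\Omega_T$.

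Next, by the very definition $(\hat{\rho},\hat{u},\hat{\theta})(t,y)=(\rho,u,\theta)(t,x)$ and $v=1/\hat{\rho}$, one has at corresponding points
\[
v(t,y)=\frac{1}{\rho(t,x)}.
\]
Lemma~\ref{lem_boun} yields $C_1^{-1}\leq v(t,y)\leq C_1$ for every $(t,y)\in\Omega_T$, with $C_1$ depending only on $\inf_{\mathbb{R}_+}(\rho_0,\theta_0)$ and $\|(\phi_0,\psi_0,\vartheta_0)\|_1$. Taking reciprocals and using the bijectivity just established, we conclude
\[
C_1^{-1}\leq \rho(t,x)\leq C_1\qquad\text{for all }(t,x)\in[0,T]\times\mathbb{R}_+,
\]
which is exactly \eqref{boun_rho}. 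Since every constant entering $C_1$ inherits the stated dependence from Lemma~\ref{lem_boun}, the corollary follows at once.
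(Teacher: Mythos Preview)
Your argument is correct and matches the paper's approach: the paper states this result as an immediate corollary of Lemma~\ref{lem_boun} without giving any proof, and your write-up simply fills in the routine details of the Lagrangian-to-Eulerian change of variables and the identity $v=1/\rho$.
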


\subsection{Pointwise estimates of temperature}
In the following lemma, we employ the maximum principle
to get the lower bound for the temperature, which does depend on the time $t$.
\begin{lemma}
  \label{lem_lower2}
  Suppose that \eqref{lem1} holds for a suitably small $\epsilon_0>0$.
Then
  \begin{equation} \label{lower2}
    \inf_{\mathbb{R}_+}\theta(t,\cdot)
    \geq\frac{\inf_{\mathbb{R}_+}\theta(s,\cdot)}{C_2\inf_{\mathbb{R}_+}
    \theta(s,\cdot)(t-s)+1}
    \quad
    {\rm for}\ 0\leq s\leq t\leq T,
  \end{equation}
  where the positive constant $C_2$ depends only
upon $\inf_{\mathbb{R}_+}(\rho_0,\theta_0)$
  and $\|(\phi_0,\psi_0,\vartheta_0)\|_1$.
\end{lemma}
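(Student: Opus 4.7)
The plan is to reduce the claim to a parabolic maximum-principle argument applied to the reciprocal $w := 1/\theta$, exploiting the uniform upper bound on $\rho$ already established in the corollary to Lemma~\ref{lem_boun}. First I would derive the PDE for $w$ directly from the energy equation $c_v\rho(\theta_t + u\theta_x) + R\rho\theta u_x = \kappa\theta_{xx} + \mu u_x^2$. Dividing by $-\theta^2$ and substituting $\theta_x = -w_x/w^2$ and $\theta_{xx} = -w_{xx}/w^2 + 2w_x^2/w^3$ gives
$$
c_v\rho(w_t + u w_x) - \kappa w_{xx} + \frac{2\kappa w_x^2}{w} = R\rho u_x w - \mu u_x^2 w^2.
$$
Completing the square in $u_x$ on the right-hand side yields the pointwise algebraic inequality $R\rho u_x w - \mu u_x^2 w^2 \leq R^2\rho^2/(4\mu)$. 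Combined with $\rho \leq C_1$ from \eqref{boun_rho}, this forces $w_t \leq R^2 C_1/(4 c_v \mu) =: C_2$ at every interior spatial maximum of $w$ (where additionally $w_x = 0$ and $w_{xx} \leq 0$).

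Next I would track the function $W(t) := \sup_{x \in \mathbb{R}_+} w(t,\cdot)$, noting that $W(t)$ is finite for each $t$ because $\vartheta(t) \in H^1(\mathbb{R}_+)$ embeds into $L^\infty$ and $\tilde{\theta}$ has a uniform positive lower bound by \eqref{decay1} and the smallness of $\delta$. The Dirichlet condition gives $w(t,0) = 1/\theta_-$ constant in $t$, and the far-field behavior gives $w(t,x) \to 1/\theta_+$ as $x \to \infty$. Consequently, whenever $W(t)$ strictly exceeds $\max\{1/\theta_-,\,1/\theta_+\}$ the supremum must be attained at an interior point, and the interior bound above applies. To turn this into an estimate for $W(t)$ I would run a barrier argument: for any fixed $s$, compare $w$ against $\Psi_\varepsilon(t) := \max\{W(s),\,1/\theta_-,\,1/\theta_+\} + (C_2+\varepsilon)(t-s)$ on $[s,T]\times\mathbb{R}_+$. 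By construction $w \leq \Psi_\varepsilon$ at $t=s$, at $x=0$, and as $x\to\infty$; any first interior contact point would force $w_t(t^*,x^*) \geq C_2 + \varepsilon$, contradicting the interior bound. Letting $\varepsilon\downarrow 0$ yields $W(t) \leq W(s) + C_2(t-s)$ for all $0 \leq s \leq t \leq T$.

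Rewriting this in terms of $m(t) := \inf_{x \in \mathbb{R}_+}\theta(t,\cdot) = 1/W(t)$ gives
$$
\frac{1}{m(t)} \leq \frac{1}{m(s)} + C_2(t-s),
$$
which, after clearing denominators, is exactly \eqref{lower2}; the constant $C_2$ inherits its dependence only on $\inf_{\mathbb{R}_+}(\rho_0,\theta_0)$ and $\|(\phi_0,\psi_0,\vartheta_0)\|_1$ through its dependence on $C_1$. The main technical hurdle is the rigorous extraction of the differential inequality for $W(t)$ on the unbounded half-line with mixed Dirichlet/far-field data; I expect this to be the step that requires the most care, since the supremum need not be attained at any single interior maximizer. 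The barrier/perturbation argument above sidesteps this issue by comparing directly with an affine function of $t$ rather than trying to differentiate $W$.
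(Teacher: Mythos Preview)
Your argument is correct and rests on the same core ideas as the paper: complete the square in $u_x$ to bound the source term by $R^2\rho^2/(4\mu)$, feed in the uniform density bound \eqref{boun_rho}, and then compare with an explicit function via the maximum principle. The only difference is packaging. The paper works directly with $\theta$: from $\theta_t+u\theta_x-\frac{\kappa}{c_v\rho}\theta_{xx}+C_2\theta^2\geq 0$ it builds the explicit subsolution $\underline{\theta}(t)=\frac{\inf_{\mathbb{R}_+}\theta(s,\cdot)}{C_2\inf_{\mathbb{R}_+}\theta(s,\cdot)(t-s)+1}$ (the solution of the Riccati ODE $\underline{\theta}'=-C_2\underline{\theta}^2$) and applies the weak maximum principle to $\Theta=\theta-\underline{\theta}$. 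You instead pass to $w=1/\theta$, which linearizes the Riccati comparison: the barrier becomes the affine function $W(s)+C_2(t-s)$, and the extra term $2\kappa w_x^2/w$ you pick up has the favorable sign. Your version makes the barrier construction more transparent at the cost of a slightly more involved PDE for $w$; the paper's version keeps the equation simpler but requires recognizing the Riccati solution. Either way the constant is $C_2=R^2C_1/(4\mu c_v)$, and since $W(s)=1/\inf_{\mathbb{R}_+}\theta(s,\cdot)\geq\max\{1/\theta_-,1/\theta_+\}$ automatically (the boundary and far-field values are attained), the $\max$ in your barrier is redundant and the two conclusions coincide exactly.
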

\begin{proof}
  It follows from $\eqref{NS_E}_3$ that $\theta$ satisfies
  \begin{equation*}
    \theta_t+u\theta_x-\frac{\kappa}{c_v\rho}\theta_{xx}
    =\frac{\mu}{c_v\rho}\left[u_x^2-\frac{P}{\mu}u_x\right]
    \geq-\frac{P^2}{4\mu c_v\rho}=-\frac{R^2\rho}{4\mu c_v}\theta^2.
  \end{equation*}
  Hence we deduce from \eqref{boun_rho} that
  \begin{equation*}
    \theta_t+u\theta_x-\frac{\kappa}{c_v\rho}\theta_{xx}
    +C_2\theta^2\geq 0.
  \end{equation*}
  Let $\Theta:=\theta-\underline{\theta}$ with
  $\underline{\theta}:=\frac{\inf_{\mathbb{R}_+}\theta(s,\cdot)}
  {C_2\inf_{\mathbb{R}_+}\theta(s,\cdot)(t-s)+1}$. We observe
  \begin{equation*}
    \Theta|_{x=0,\infty}\geq 0,\quad
    \Theta|_{t=s}\geq 0,
  \end{equation*}
  and
  \begin{equation*}
    \begin{aligned}
      \Theta_t+u\Theta_x-\frac{\kappa}{c_v\rho}\Theta_{xx}+C_2(\theta+\underline{\theta})\Theta
      =\theta_t+u\theta_x
      -\frac{\kappa}{c_v\rho}\theta_{xx}+C_2\theta^2
       \geq 0.
    \end{aligned}
  \end{equation*}
  Applying the weak maximum principle for the parabolic equation,
we have that $\Theta(t,x)\geq 0$
  for $0\leq s\leq t\leq T$ and $x\in\mathbb{R}_+$.
This completes the proof of the lemma.
\end{proof}
Next we have the $L^2$-norm in both time and space of $\vartheta_x$.
\begin{lemma}
  \label{lem_the1}
  If \eqref{lem1} holds for a sufficiently small positive constant
  $\epsilon_0$, then
  \begin{equation}\label{lem2.3.1}
    \sup_{0\leq t\leq T}\int_{\mathbb{R}_+}\left[\phi^2+\vartheta^2+\psi^2\right]
    +\int_0^T\int_{\mathbb{R}_+}\left[(1+\theta+\psi^2)\psi_x^2+\vartheta_x^2\right]
    \lesssim 1.
  \end{equation}
\end{lemma}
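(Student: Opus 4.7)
The strategy is to combine three energy-type identities: testing the momentum perturbation equation $\eqref{per}_2$ against $\psi$, testing the energy perturbation equation $\eqref{per}_3$ against $\vartheta$, and testing $\eqref{per}_2$ against $\psi^3$ to generate the $\psi^2\psi_x^2$ dissipation. The foundation is provided by the basic energy estimate \eqref{lem3} and the uniform density bound $\rho\sim 1$ from the corollary following Lemma \ref{lem_boun}. An immediate consequence is $\sup_t\|\phi\|^2+\sup_t\|\psi\|^2\lesssim 1$, via the pointwise inequality $\Phi(z)\gtrsim(z-1)^2/(z+1)^2$ applied to $z=\tilde\rho/\rho$ together with the $\tfrac12\psi^2$-piece of $\mathcal{E}$.

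First, I would multiply $\eqref{per}_2$ by $\psi$ and integrate over $[0,t]\times\mathbb{R}_+$. The boundary contributions at $x=0$ vanish thanks to $\psi(t,0)=0$, and continuity simplifies the convective term, so that
\[
\tfrac12\int\rho\psi^2+\mu\int_0^t\!\!\int\psi_x^2=\mathrm{IC}+\int_0^t\!\!\int(P-\tilde P)\psi_x+\int_0^t\!\!\int g\psi.
\]
Writing $P-\tilde P=R\rho\vartheta+R\phi\tilde\theta$, Cauchy's inequality absorbs the cross term into a fraction of the dissipation plus $C\int_0^t\int(\phi^2+\vartheta^2)$; the $g$-contribution is controlled using the exponential decay of $(\tilde\rho,\tilde u,\tilde\theta)$ from Lemma 1.1 and the Poincar\'e-type trick already used in the proof of Lemma \ref{lem_bas}.

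The main obstacle arises in the $\vartheta$-estimate. Testing $\eqref{per}_3$ against $\vartheta$ delivers $\kappa\int_0^t\int\vartheta_x^2$ as dissipation on the left, but also introduces the cubic nonlinear terms $R\int\rho\theta\psi_x\vartheta$ and $\int\mu u_x^2\vartheta$ on the right. Splitting $P=R\rho(\tilde\theta+\vartheta)$ and $u_x=\psi_x+\tilde u_x$, the quadratic-in-perturbation pieces are routine, while the genuinely cubic remainders $R\int\rho\vartheta^2\psi_x$ and $\int\mu\psi_x^2\vartheta$ are bounded by Cauchy using $|\vartheta|\leq\theta+\tilde\theta$ into $\epsilon\int(1+\theta+\psi^2)\psi_x^2+C_\epsilon\int\vartheta^2+\text{lower order}$. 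This is precisely why the weight $(1+\theta+\psi^2)\psi_x^2$ appears in the conclusion: it is the exact dissipation needed to absorb the cubic coupling between $\vartheta$, $\psi$ and $\psi_x$ without invoking any a priori $L^\infty$ bound on $\theta$.

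To isolate $\int\psi^2\psi_x^2$ as a bona fide dissipation on the left, I would test $\eqref{per}_2$ against $\psi^3$: integration by parts yields $3\mu\int\psi^2\psi_x^2$, and the cross term $3\int(P-\tilde P)\psi^2\psi_x$ is Cauchy-absorbed into $\epsilon\int\psi^2\psi_x^2+C\int\psi^2(\phi^2+\vartheta^2)$, with the latter tamed via the one-dimensional Sobolev embedding $\|\psi\|_{L^\infty}^2\lesssim\|\psi\|\|\psi_x\|$ and the $\sup_t\|\psi\|\lesssim 1$ bound already in hand. Summing the three identities, choosing the Cauchy parameters sufficiently small to absorb all dissipative weights into the left-hand side, and invoking Gronwall's inequality closes the estimate. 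The main obstacle remains the delicate bookkeeping in the $\vartheta$-estimate: the constants in the Cauchy absorptions must remain independent of the a priori upper bound $M_2$ in \eqref{apriori}, and this is what forces the precise weighted structure $(1+\theta+\psi^2)\psi_x^2$.
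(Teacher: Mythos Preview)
Your approach has a genuine gap in the $\vartheta$-estimate. When you test $\eqref{per}_3$ against $\vartheta$, the cubic term $R\int\rho\vartheta^2\psi_x$ cannot be absorbed into $\epsilon\int(1+\theta+\psi^2)\psi_x^2+C_\epsilon\int\vartheta^2$ as you claim. A Cauchy split gives either $\epsilon\int\vartheta^2\psi_x^2+C_\epsilon\int\vartheta^2$ or $\epsilon\int\theta\psi_x^2+C_\epsilon\int\vartheta^4/\theta$; in the first case $\vartheta^2\lesssim(\theta+\tilde\theta)^2\sim 1+\theta^2$, which overshoots the available weight $1+\theta$, and in the second the quartic term is uncontrolled. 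Moreover, none of your three test functions $\psi$, $\vartheta$, $\psi^3$ manufactures the dissipation $\int\theta\psi_x^2$ that appears in \eqref{lem2.3.1}; testing $\eqref{per}_2$ with $\psi$ gives only $\int\psi_x^2$, and with $\psi^3$ only $\int\psi^2\psi_x^2$. Finally, Gronwall does not rescue you: a residual term of the form $C\int_0^t\int\vartheta^2$ with constant coefficient yields only $e^{Ct}$ growth, not a bound uniform in $T$.

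The paper avoids all of this by a level-set truncation: it tests $\eqref{per}_3$ against $(\vartheta-2)_+$ and $\eqref{per}_2$ against $2\psi(\vartheta-2)_+$, and then combines them so that the bad $\int\psi_x^2\vartheta$ term cancels (up to a factor) against a new dissipation $\int\psi_x^2(\vartheta-2)_+\sim\int_{\{\vartheta>2\}}\theta\psi_x^2$. On the set $\{\vartheta>2\}$ one has $\theta\sim\vartheta$, and the key $L^1$ bound $\int_{\{\vartheta>1\}}\theta\lesssim\int\rho\mathcal{E}\lesssim 1$ from \eqref{lem3} lets every cubic remainder be reduced to $\int_0^T\sup_{\mathbb{R}_+}(\vartheta-1)_+^2$, which is closed in a final step by the one-dimensional Sobolev inequality restricted to the level set $\{\vartheta>1\}$. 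Your $\psi^3$-testing is indeed used (Step 4 of the paper), but only after the truncation machinery is in place.
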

\begin{proof}
  We divide the proof into five steps.

  \noindent{\bf Step 1}. First, for each $t\geq 0$ and $a>0$, we denote
  $$
  \Omega'_a(t):=\{x\in\mathbb{R}_+: \vartheta(t,x)>a\}.
  $$
  Then it follows from \eqref{lem3} and \eqref{boun_rho} that
  \begin{equation}\label{pro2.3.1}
  \begin{aligned}
    &\sup_{0\leq t\leq T}\left[\int_{\mathbb{R}_+}\phi^2
    +\int_{\mathbb{R}_+\backslash\Omega'_a(t)}\vartheta^2
    +\int_0^t|\phi(s,0)|^2ds\right]
    +\int_0^T\int_{\mathbb{R}_+\backslash\Omega'_a(t)}
    \left[\psi_x^2+\vartheta_x^2\right]\\[0.5mm]
    &\leq C(a)\sup_{0\leq t\leq T}\left[\int_{\mathbb{R}_+}\rho\mathcal{E}
    +\int_0^t\rho\Phi\left(\frac{\tilde{\rho}}{\rho}\right)(s,0)ds\right]
    +C(a)\int_0^T\int_{\mathbb{R}_+}\left[\frac{\psi_x^2}{\theta}
    +\frac{\vartheta_x^2}{\theta^2}\right]\leq C(a).
    \end{aligned}
  \end{equation}

  \noindent{\bf Step 2}. We now estimate the integral $\int_0^T\int_{\Omega'_a(t)}\vartheta_x^2$.
  For this purpose, we multiply $\eqref{per}_3$
  by $(\vartheta-2)_+:=\max\{\vartheta-2,0\}$ and integrate the resulting identity
  over $(0,t)\times \mathbb{R}_+$ to obtain
  \begin{equation}\label{pro2.3.2}
    \begin{aligned}
    &\frac{c_v}{2}\int_{\mathbb{R}_+}\rho(\vartheta-2)_+^2
    +\kappa\int_0^t\int_{\Omega'_2(s)}\vartheta_x^2
    +R\int_0^t\int_{\mathbb{R}_+}\rho\theta\psi_x(\vartheta-2)_+\\[0.5mm]
    =~&\frac{c_v}{2}\int_{\mathbb{R}_+}\rho_0(\vartheta_0-2)_+^2
    +\int_0^t\int_{\mathbb{R}_+}h(\vartheta-2)_+
    +\mu\int_0^t\int_{\mathbb{R}_+}\psi_x^2(\vartheta-2)_+.
  \end{aligned}
  \end{equation}
  To estimate the last term of \eqref{pro2.3.2}, we multiply $\eqref{per}_2$
  by $2\psi(\vartheta-2)_+$ and integrate the resulting identity
  over $(0,t)\times\mathbb{R}_+$ to find
  \begin{equation}\label{pro2.3.3}
    \begin{aligned}
      \int_{\mathbb{R}_+}&\psi^2\rho(\vartheta-2)_+
      +2\mu\int_0^t\int_{\mathbb{R}_+}\psi_x^2(\vartheta-2)_+
      -\int_0^t\int_{\Omega'_2(s)}\rho\psi^2(\vartheta_t+u\vartheta_x)\\[0.5mm]
       =~&\int_{\mathbb{R}_+}\psi_0^2\rho_0(\vartheta_0-2)_+
      +2R\int_0^t\int_{\mathbb{R}_+}\rho\theta\psi_x(\vartheta-2)_+
      +2R\int_0^t\int_{\Omega'_2(s)}\rho\theta\psi\vartheta_x\\[0.5mm]
      ~&+2R\int_0^t\int_{\mathbb{R}_+}\psi(\tilde{\rho}\tilde{\theta})_x(\vartheta-2)_+
      -2\mu\int_0^t\int_{\Omega'_2(s)}\psi\psi_x\vartheta_x
      +2\int_0^t\int_{\mathbb{R}_+}g\psi(\vartheta-2)_+.
    \end{aligned}
  \end{equation}
 Combining \eqref{pro2.3.3} and \eqref{pro2.3.2}, we have from $\eqref{per}_3$
 that
 \begin{equation}\label{pro2.3.4}
    \begin{aligned}
      &\int_{\mathbb{R}_+}\left[\frac{c_v}{2}\rho(\vartheta-2)_+^2
      +\psi^2\rho(\vartheta-2)_+\right]
      +\kappa\int_0^t\int_{\Omega'_2(s)}\vartheta_x^2
      +\mu\int_0^t\int_{\mathbb{R}_+}\psi_x^2(\vartheta-2)_+\\
      =&\int_{\mathbb{R}_+}\left[\frac{c_v}{2}\rho_0(\vartheta_0-2)_+^2
      +\psi_0^2\rho_0(\vartheta_0-2)_+\right]
      +\sum_{p=1}^6 \mathcal{J}_p,
    \end{aligned}
  \end{equation}
where each term $\mathcal{J}_p$ in the decomposition will be defined below.
We now define and estimate all the terms in the decomposition.
We first consider
\begin{equation*}
  \mathcal{J}_1:=
  R\int_0^t\int_{\mathbb{R}_+}\rho\theta\psi_x(\vartheta-2)_+
\end{equation*}
and
\begin{equation*}
  \mathcal{J}_2:=
  2R\int_0^t\int_{\Omega'_2(s)}\rho\theta\psi\vartheta_x.
\end{equation*}
  In light of \eqref{boun_rho}
  and \eqref{lem3}, we have
  \begin{equation}\label{pro2.3.5}
    \int_{\mathbb{R}_+}\psi^2+\int_{\Omega'_1(s)}\theta
    \lesssim \int_{\mathbb{R}_+}\rho\mathcal{E}\lesssim 1.
  \end{equation}
  From Cauchy's inequality and \eqref{boun_rho}, we obtain
  \begin{equation}\label{I1}
    \begin{aligned}
   |\mathcal{J}_1|\leq~& \epsilon \int_0^t\int_{\mathbb{R}_+}\psi_x^2(\vartheta-2)_+
   +C(\epsilon)\int_0^t\int_{\mathbb{R}_+}\theta^2(\vartheta-2)_+\\
   \leq~&\epsilon \int_0^t\int_{\mathbb{R}_+}\psi_x^2(\vartheta-2)_+
   +C(\epsilon)\int_0^t\int_{\mathbb{R}_+}\theta(\vartheta-1)_+^2\\
    \leq~&\epsilon \int_0^t\int_{\mathbb{R}_+}\psi_x^2(\vartheta-2)_+
  +C(\epsilon)\int_0^t\sup_{\mathbb{R}_+}(\vartheta-1)_+^2,
  \end{aligned}
  \end{equation}
  and
  \begin{equation}\label{I2}
    \begin{aligned}
     |\mathcal{J}_2|\leq~&\epsilon\int_0^t\int_{\Omega'_2(s)}\vartheta_x^2
     +C(\epsilon)\int_0^t\int_{\Omega'_2(s)}\psi^2\theta^2\\
     \leq~&\epsilon\int_0^t\int_{\Omega'_2(s)}\vartheta_x^2
     +C(\epsilon)\int_0^t\int_{\Omega'_2(s)}\psi^2\left(\vartheta-1\right)_+^2\\
     \leq~&\epsilon\int_0^t\int_{\Omega'_2(s)}\vartheta_x^2
     +C(\epsilon)\int_0^t\sup_{\mathbb{R}_+}(\vartheta-1)_+^2.\\
   \end{aligned}
  \end{equation}
  Let us define
  \begin{equation*}
    \mathcal{J}_3:=
    \int_0^t\int_{\Omega'_2(s)}\left[h(\vartheta-2)_+
      +c_v^{-1}h\psi^2+2g\psi(\vartheta-2)_+\right].
  \end{equation*}
  It follows from \eqref{boun_rho} and the identity $P-\tilde{P}=R\rho\vartheta
  +R\tilde{\theta}\phi$ that
  \begin{equation}\label{g_h}
    |g|\lesssim |\tilde{u}_x|(\phi,\psi)|,\ \
    |h|\lesssim |(\tilde{u}_x,\tilde{\theta}_x)||(\phi,\psi,\vartheta,\psi_x)|.
  \end{equation}
  Applying Cauchy's inequality to $\mathcal{J}_3$ yields
  \begin{equation}\label{I3'}
    \begin{aligned}
      |\mathcal{J}_3|\lesssim&\int_0^t\int_{\Omega'_2(s)}
      |(\tilde{u}_x,\tilde{\theta}_x)||(\phi,\psi,\vartheta,\psi_x)|
      \left[(\vartheta-2)_+(1+|\psi|)+\psi^2\right]\\
      \lesssim&\int_0^t\int_{\Omega'_2(s)}|(\tilde{u}_x,\tilde{\theta}_x)|
      \left[(\phi^2+\psi^2+\vartheta^2)+\psi_x^2+(\vartheta-2)_+^2(1+\psi^2)+\psi^4\right].
    \end{aligned}
  \end{equation}
  We obtain from \eqref{decay1} and \eqref{pro2.3.5} that
   \begin{align}
\label{I3_2}
    &\int_0^t\int_{\mathbb{R}_+}|(\tilde{u}_x,\tilde{\theta}_x)|(\vartheta-2)_+^2(1+\psi^2)
      \lesssim \int_0^t\sup_{\mathbb{R}_+}(\vartheta-1)_+^2,
  \\ \label{I3_3}
   & \int_0^t\int_{\mathbb{R}_+}|(\tilde{u}_x,\tilde{\theta}_x)|\psi^4
      \lesssim \int_0^t\|\psi\|_{L^\infty}^4\|(\tilde{u}_x,\tilde{\theta}_x)\|_{L^1}
      \lesssim \delta \int_0^t\|\psi\|^2\|\psi_x\|^2.
  \end{align}
  Plugging \eqref{pro2.1.4}, \eqref{I3_2}-\eqref{I3_3} into \eqref{I3'},
   we deduce from \eqref{pro2.3.1},
  \eqref{pro2.3.5} and Lemma \ref{lem_bas} that
  \begin{equation}\label{I3}
    \begin{aligned}
      |\mathcal{J}_3|\lesssim~&\delta\int_0^t
      \left[|\phi(s,0)|^2+\|(\phi_x,\psi_x,\vartheta_x)(s)\|^2\right]ds
      +\int_0^t\sup_{\mathbb{R}_+}(\vartheta-1)_+^2\\
      \lesssim~&1+\delta\int_0^t\int_{\mathbb{R}_+}
      \left[m_2^{-1}\frac{\theta\phi_x^2}{\rho^2}
      +M_2^2\left(\frac{\psi_x^2}{\theta}+\frac{\vartheta_x^2}{\theta^2}\right)\right]
      +\int_0^t\sup_{\mathbb{R}_+}(\vartheta-1)_+^2\\
  \lesssim~& 1+\int_0^t\sup_{\mathbb{R}_+}(\vartheta-1)_+^2.
    \end{aligned}
  \end{equation}
  Let us now consider the term
  \begin{equation*}
    \mathcal{J}_4:=
    2R\int_0^t\int_{\mathbb{R}_+}\psi(\tilde{\rho}\tilde{\theta})_x(\vartheta-2)_+,
  \end{equation*}
  which is trivially estimated by
  \begin{equation}\label{I4}
    |\mathcal{J}_4|\lesssim\int_0^t\sup_{\mathbb{R}_+}(\vartheta-2)_+
    \|\psi\|\|(\tilde{\rho}_x,\tilde{\theta}_x)\|
    \lesssim\int_0^t\sup_{\mathbb{R}_+}(\vartheta-1)_+^2.
  \end{equation}
  For the term
  \begin{equation*}
    \mathcal{J}_5:=
    \int_0^t\int_{\Omega'_2(s)}\left[\mu\psi_x^2c_v^{-1}\psi^2
     -P\psi_xc_v^{-1}\psi^2-2\mu\psi\psi_x\vartheta_x\right],
  \end{equation*}
  we apply Cauchy's inequality and \eqref{pro2.3.5} to deduce
  \begin{equation}\label{I5}
  \begin{aligned}
    |\mathcal{J}_5|\lesssim~& \epsilon \int_0^t\int_{\Omega'_2(s)}\vartheta_x^2
   +C(\epsilon)\int_0^t\int_{\Omega'_2(s)}\psi^2\psi_x^2
   +\int_0^t\int_{\Omega'_2(s)}\psi^2\theta^2\\
    \lesssim~&\epsilon \int_0^t\int_{\Omega'_2(s)}\vartheta_x^2
   +C(\epsilon)\int_0^t\int_{\Omega'_2(s)}\psi^2\psi_x^2
   +\int_0^t\int_{\Omega'_2(s)}\psi^2(\vartheta-1)_+^2\\
   \lesssim~&\epsilon \int_0^t\int_{\Omega'_2(s)}\vartheta_x^2
   +C(\epsilon)\int_0^t\int_{\Omega'_2(s)}\psi^2\psi_x^2
   +\int_0^t\sup_{\mathbb{R}_+}(\vartheta-1)_+^2.
  \end{aligned}
  \end{equation}
  We finally consider
  \begin{equation*}
    \mathcal{J}_6:=
    \int_0^t\int_{\Omega'_2(s)}c_v^{-1}\kappa\psi^2\vartheta_{xx}.
  \end{equation*}
  In order to estimate $\mathcal{J}_6$,
  we apply Lebesgue's dominated convergence theorem to find
  \begin{equation}\label{I6}
    \begin{aligned}
    \mathcal{J}_6
    =~&\frac{\kappa}{c_v}\lim_{\nu\to 0^+}\int_0^t\int_{\mathbb{R}_+}
    \varphi_{\nu}(\vartheta)\psi^2\vartheta_{xx}\\
    =~&\frac{\kappa}{c_v}\lim_{\nu\to 0^+}\int_0^t\int_{\mathbb{R}_+}
    \left[-2\varphi_{\nu}(\vartheta)\psi\psi_x\vartheta_{x}
    -\varphi'_{\nu}(\vartheta)\psi^2\vartheta_{x}^2\right]\\
    \leq ~&-\frac{\kappa}{c_v}\lim_{\nu\to 0^+}\int_0^t\int_{\mathbb{R}_+}
    2\varphi_{\nu}(\vartheta)\psi\psi_x\vartheta_{x}\\
    \leq ~&\epsilon \int_0^t\int_{\mathbb{R}_+}\vartheta_x^2
    +C(\epsilon)\int_0^T\int_{\mathbb{R}_+}\psi^2\psi_x^2.
  \end{aligned}
  \end{equation}
  where the approximate scheme $\varphi_{\nu}(\vartheta)$ is defined by
  \begin{equation*}
  \varphi_{\nu}(\vartheta)=
  \begin{cases}
    1, \quad \quad &\vartheta-2>\nu,\\
    (\vartheta-2)/\nu, \quad  \quad& 0<\vartheta-2\leq\nu,\\
    0, \quad \quad&\vartheta-2\leq 0.\\
  \end{cases}
\end{equation*}
Plugging \eqref{I1}-\eqref{I2}, \eqref{I3}-\eqref{I6} into \eqref{pro2.3.4}, we get
from \eqref{boun_rho} that
\begin{equation}\label{pro2.3.6}
  \begin{aligned}
    &\int_{\mathbb{R}_+}(\vartheta-2)_+^2
      +\int_0^t\int_{\Omega'_2(s)}\left[\vartheta_x^2
      +\psi_x^2(\vartheta-2)_+\right]\\
      \lesssim~& 1+\epsilon\int_0^t\int_{\mathbb{R}_+}\vartheta_x^2
      +C(\epsilon)\int_0^t\int_{\mathbb{R}_+}\psi^2\psi_x^2
      +C(\epsilon)\int_0^t\sup_{\mathbb{R}_+}(\vartheta-1)_+^2.
  \end{aligned}
\end{equation}

\noindent{\bf Step 3}.
We obtain from \eqref{lem3} that
\begin{equation}\label{pro2.3.7}
  \begin{aligned}
 \int_0^t\int_{\mathbb{R}_+}\left[\vartheta_x^2+\psi_x^2\theta\right]
 \lesssim~&\int_0^t\int_{\Omega'_3(s)}
 \left[\vartheta_x^2+\psi_x^2(\vartheta-2)_+\right]
 +\int_0^t\int_{\mathbb{R}_+\setminus\Omega'_3(s)}
 \left[\frac{\vartheta_x^2}{\theta^2}+\frac{\psi_x^2}{\theta}\right]\\
 \lesssim~&\int_0^t\int_{\Omega'_2(s)}
 \left[\vartheta_x^2+\psi_x^2(\vartheta-2)_+\right]
 +1.
\end{aligned}
\end{equation}
Combining \eqref{pro2.3.7} and \eqref{pro2.3.6}, and choosing $\epsilon$
sufficiently small, we have
\begin{equation}\label{pro2.3.8}
      \int_{\mathbb{R}_+}(\vartheta-2)_+^2
      +\int_0^t\int_{\mathbb{R}_+}\left[\vartheta_x^2
      +\psi_x^2\theta\right]
      \lesssim 1
      +\int_0^t\sup_{\mathbb{R}_+}(\vartheta-1)_+^2
      +\int_0^t\int_{\mathbb{R}_+}\psi^2\psi_x^2.
\end{equation}

\noindent{\bf Step 4}.
To estimate the last term of \eqref{pro2.3.8}, we
multiply $\eqref{per}_2$ by $\psi^3$ and then integrate the resulting identity
over $(0,t)\times\mathbb{R}_+$ to have
\begin{equation}\label{pro2.3.9}
 \begin{aligned}
   &\frac{1}{4}\int_{\mathbb{R}_+}\rho\psi^4
   +3\mu\int_0^t\int_{\mathbb{R}_+}\psi^2\psi_x^2
   -\frac{1}{4}\int_{\mathbb{R}_+}\rho_0\psi_0^4\\
   =~&
   3R\int_0^t\int_{\mathbb{R}_+}\psi^2\psi_x\tilde{\theta}\phi
   +3R\int_0^t\int_{\mathbb{R}_+}\psi^2\psi_x\rho\vartheta
   +\int_0^t\int_{\mathbb{R}_+}g\psi^3.
 \end{aligned}
\end{equation}
From \eqref{pro2.3.1} and \eqref{pro2.3.5}, we have
\begin{equation}\label{pro2.3.10}
 \begin{aligned}
    &\int_0^t\int_{\mathbb{R}_+}\psi^2\psi_x\tilde{\theta}\phi
+\int_0^t\int_{\mathbb{R}_+\setminus\Omega'_2(s)}\psi^2\psi_x\rho\vartheta\\
    &\lesssim\int_0^t\|\psi\|_{L_x^\infty}^2\|\psi_x\|
     \left[\|\phi\|+\|\vartheta\|_{L^2(\mathbb{R}_+\setminus\Omega_2'(s))}\right]
    \lesssim\int_0^t\|\psi_x\|^2,
  \end{aligned}
\end{equation}
We then apply Cauchy's inequality to derive
\begin{equation}\label{pro2.3.11}
  \begin{aligned}
  \int_0^t\int_{\Omega_2'(s)}\psi^2\psi_x\rho\vartheta
  \leq~&
  \epsilon\int_0^t\int_{\mathbb{R}_+}\psi^2\psi_x^2
  +C(\epsilon)\int_0^t\int_{\Omega_2'(s)}\psi^2\vartheta^2\\
  \leq~& \epsilon\int_0^t\int_{\mathbb{R}_+}\psi^2\psi_x^2
  +C(\epsilon)\int_0^t\sup_{\mathbb{R}_+}\left(\vartheta-1\right)_+^2.
\end{aligned}
\end{equation}
In view of  \eqref{pro2.3.1}, \eqref{pro2.3.5}, \eqref{g_h}
and \eqref{lem2}, we have
\begin{equation}\label{pro2.3.12}
  \begin{aligned}
  \int_0^t\int_{\mathbb{R}_+}g\psi^3
    \lesssim~&\int_0^t\int_{\mathbb{R}_+}|\tilde{u}_x|\left(\psi^4+\phi^4\right)\\
    \lesssim~&\delta\int_0^t\left(\|\psi\|^2\|\psi_x\|^2+\|\phi\|^2\|\phi_x\|^2\right)\\
    \lesssim~&\delta\int_0^t\|\psi_x\|^2+
    \delta M_1^2m_2^{-1}\int_0^t\int_{\mathbb{R}_+}\frac{\theta\phi_x^2}{\rho^2}\\
    \lesssim~&\delta\int_0^t\|\psi_x\|^2+\delta M_1^2m_2^{-1}M_2.
\end{aligned}
\end{equation}
Plugging \eqref{pro2.3.10} -\eqref{pro2.3.12} into \eqref{pro2.3.9}, and taking $\epsilon$
sufficiently small, we derive from \eqref{lem1} that
\begin{equation}\label{pro2.3.13}
  \begin{aligned}
   \int_{\mathbb{R}_+}\psi^4+\int_0^t\int_{\mathbb{R}_+}\psi^2\psi_x^2
   \lesssim1+\int_0^t\int_{\mathbb{R}_+}\psi_x^2
   +\int_0^t\sup_{\mathbb{R}_+}\left(\vartheta-1\right)_+^2.
 \end{aligned}
\end{equation}
We note from \eqref{lem3} that
\begin{equation}\label{pro2.3.14}
  \int_0^t\int_{\mathbb{R}_+}\psi_x^2
 \leq \epsilon \int_0^t\int_{\mathbb{R}_+}\theta\psi_x^2
 +C(\epsilon)\int_0^t\int_{\mathbb{R}_+}\frac{\psi_x^2}{\theta}
   \leq \epsilon \int_0^t\int_{\mathbb{R}_+}\theta\psi_x^2
   +C(\epsilon).
\end{equation}
Combination of  \eqref{pro2.3.14} and \eqref{pro2.3.13} yields
\begin{equation}\label{pro2.3.15}
  \int_{\mathbb{R}_+}\psi^4+\int_0^t\int_{\mathbb{R}_+}(1+\psi^2)\psi_x^2
  \lesssim C(\epsilon)+\epsilon \int_0^t\int_{\mathbb{R}_+}\theta\psi_x^2
  +\int_0^t\sup_{\mathbb{R}_+}\left(\vartheta-1\right)_+^2.
\end{equation}
We plug \eqref{pro2.3.15} into \eqref{pro2.3.8} and choose $\epsilon$
suitable small to find
\begin{equation}\label{pro2.3.16}
    \int_{\mathbb{R}_+}
    \left[\left(\vartheta-2\right)_+^2+\psi^4\right]
    +\int_0^t\int_{\mathbb{R}_+}\left[\vartheta_x^2+\psi_x^2\left(1+\theta+\psi^2\right)\right]
    \lesssim 1+\int_0^t\sup_{\mathbb{R}_+}\left(\vartheta-1\right)_+^2.
 \end{equation}

\noindent{\bf Step 5}.
It remains to estimate the last term of \eqref{pro2.3.16}.
According to the fundamental theorem of calculus, we have from \eqref{pro2.3.5} that
\begin{equation}\label{pro2.3.17}
  \begin{aligned}
    \int_0^T\sup_{\mathbb{R}_+}\left(\vartheta-1\right)_+^2
    \leq ~&\int_0^T\left[\int_{\Omega_{1}'(s)}\left|\vartheta_x\right|\right]^2\\
    \leq ~&\int_0^T\left[\int_{\Omega_{1}'(s)}\frac{\vartheta_x^2}{\theta}
    \int_{\Omega_{1}'(s)}\theta\right]\\
    \leq ~&\epsilon\int_0^T\int_{\mathbb{R}_+}\vartheta_x^2
    +C(\epsilon)\int_0^T\int_{\mathbb{R}_+}\frac{\vartheta_x^2}{\theta^2}\\
    \leq ~&\epsilon\int_0^T\int_{\mathbb{R}_+}\vartheta_x^2+C(\epsilon).
  \end{aligned}
\end{equation}
Plug \eqref{pro2.3.17} into \eqref{pro2.3.16} and choose $\epsilon>0$
suitable small to obtain \eqref{lem2.3.1}.
This completes the proof of the lemma.
\end{proof}
We obtain the upper bound for the temperature uniformly in both time and
space in the next lemma,
by combining Lemma \ref{lem_the1} and some desired uniform estimates
on the spatial derivatives of $(\phi,\psi,\vartheta)$.
\begin{lemma}\label{lem_the2}
If \eqref{lem1} holds for a sufficiently small $\epsilon>0$, then we have
  \begin{gather}\label{lem2.3.3}
   \theta(t,x)\leq C_3 \quad {\rm for\ all}\ (t,x)\in[0,T]\times\mathbb{R}_+,\\
   \label{lem2.3.2}
    \sup_{0\leq t\leq T}\|(\phi,\psi,\vartheta)(t)\|_1^2
    +\int_0^T
    \left[\|\sqrt\theta\phi_x(t)\|^2+\|(\psi_{x},\vartheta_{x})(t)\|_1^2\right]dt
    \leq C_4^2.
  \end{gather}
\end{lemma}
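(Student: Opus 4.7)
The plan is to upgrade the existing bounds to a uniform $H^1$ estimate on $(\phi,\psi,\vartheta)$, from which the $L^\infty$ bound on $\theta$ will follow by the one-dimensional Sobolev embedding. The inputs I would rely on are the density bound \eqref{boun_rho}, the $L^\infty_tL^2_x$ bound on $(\phi,\psi,\vartheta)$ and the $L^2_{t,x}$ bound on $(\sqrt{\theta}\phi_x,\sqrt{1+\theta+\psi^2}\,\psi_x,\vartheta_x)$ from Lemmas \ref{lem_bas} and \ref{lem_the1}, and the exponential decay \eqref{decay1} of the stationary profile.

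First I would test $\eqref{per}_2$ with $-\psi_{xx}$ and $\eqref{per}_3$ with $-\vartheta_{xx}$, integrating over $\mathbb{R}_+$. Since $\psi(t,0)=\vartheta(t,0)=0$ for all $t$ (hence $\psi_t(t,0)=\vartheta_t(t,0)=0$), the boundary contributions from integration by parts either vanish or reduce, via $\eqref{per}_1$ evaluated at $x=0$, to quantities of the form $\phi_x^2(t,0)$ already controlled in \eqref{lem2}. This yields evolution identities schematically of the form
\begin{equation*}
\frac{d}{dt}\int_{\mathbb{R}_+}\rho\psi_x^2 + \mu\int_{\mathbb{R}_+}\psi_{xx}^2 \lesssim \mathcal{R}_1,\qquad
\frac{d}{dt}\int_{\mathbb{R}_+}c_v\rho\vartheta_x^2 + \kappa\int_{\mathbb{R}_+}\vartheta_{xx}^2 \lesssim \mathcal{R}_2,
\end{equation*}
where $\mathcal{R}_1,\mathcal{R}_2$ collect cross terms like $(P-\tilde P)_x\psi_{xx}$, $\rho u\psi_x\psi_{xx}$, the sources $g\psi_{xx},\,h\vartheta_{xx}$, and the nonlinear dissipation $\mu\psi_x^2\vartheta_{xx}$.

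The key observation for closing these estimates is that $\|\theta\|_{L^\infty}$ enters only through $P-\tilde P=R\rho\vartheta+R\tilde\theta\phi$ and $P\psi_x=R\rho\theta\psi_x$. I would control it by the one-dimensional interpolation
\begin{equation*}
\|\vartheta\|_{L^\infty_x}^2 \le 2\,\|\vartheta\|\cdot\|\vartheta_x\|,
\end{equation*}
together with $\|\vartheta\|_{L^\infty_tL^2_x}\lesssim 1$ from Lemma \ref{lem_the1}, so that $\theta\lesssim 1+\|\vartheta_x\|$. Inserting this into $\mathcal{R}_1,\mathcal{R}_2$ and using Cauchy's inequality, the decay \eqref{decay1}, the Gagliardo–Nirenberg bound $\|\psi\|_{L^\infty}^2\lesssim\|\psi\|\cdot\|\psi_x\|$, and the uniform $\rho$-bounds, each term is either absorbed into the principal dissipation $\mu\|\psi_{xx}\|^2+\kappa\|\vartheta_{xx}\|^2$ or estimated by $[\|\psi_x\|^2+\|\vartheta_x\|^2]$ times a coefficient that is integrable in $t$ by \eqref{lem2.3.1} and \eqref{lem2}.

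Applying Gronwall to the sum then yields $\sup_{0\le t\le T}(\|\psi_x\|^2+\|\vartheta_x\|^2)+\int_0^T(\|\psi_{xx}\|^2+\|\vartheta_{xx}\|^2)\le C_4^2$; combining this with Lemmas \ref{lem_bas}, \ref{lem_the1} and \eqref{boun_rho} gives \eqref{lem2.3.2}, and the Sobolev estimate $\|\vartheta\|_{L^\infty_{t,x}}^2\le 2\|\vartheta\|_{L^\infty_tL^2}\|\vartheta_x\|_{L^\infty_tL^2}$ delivers \eqref{lem2.3.3}. The main obstacle will be the simultaneous appearance of $\|\theta\|_{L^\infty}$ on both sides: the $\theta\psi_x\psi_{xx}$ term from $(P-\tilde P)_x$, after applying the interpolation above, creates a contribution proportional to $\|\vartheta_x\|\cdot\|\psi_x\|\cdot\|\psi_{xx}\|$, and one must verify that—after absorbing $\epsilon\|\psi_{xx}\|^2$—the remaining factor of $\|\vartheta_x\|^2\|\psi_x\|^2$ can be handled either by smallness of $\delta$ (through \eqref{lem1}) or by the already-integrable $\int_0^T\|\vartheta_x\|^2\,dt$ appearing as the Gronwall coefficient.
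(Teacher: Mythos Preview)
Your overall strategy---test the momentum and energy equations against second derivatives and close in $H^1$---is the right one, and the paper does exactly this. But the closing mechanism you propose (a Gronwall inequality on $X(t)=\|\psi_x\|^2+\|\vartheta_x\|^2$) will not go through as written, and the paper uses a different device.

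The obstruction is not the term you single out. In $(P-\tilde P)_x=R(\theta\phi_x+\rho\vartheta_x+\phi\tilde\theta_x+\vartheta\tilde\rho_x)$ there is no $\theta\psi_x$ contribution; the $\theta$-dependent piece is $\theta\phi_x$, and after Cauchy it contributes $\int\theta^2\phi_x^2\le\|\theta\|_{L^\infty_x}\int\theta\phi_x^2$, which is harmless because $\int_0^T\!\int\theta\phi_x^2\lesssim 1$ is already available from \eqref{lem2} and \eqref{boun_rho}. The genuine obstacle is the nonlinear dissipation $\mu\psi_x^2\vartheta_{xx}$ in $\mathcal R_2$: after absorbing $\epsilon\|\vartheta_{xx}\|^2$ one is left with $\int\psi_x^4\le 2\|\psi_x\|^3\|\psi_{xx}\|\le\epsilon\|\psi_{xx}\|^2+C\|\psi_x\|^6$. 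The residue $\|\psi_x\|^6$ is cubic in $X$ and cannot be written as an integrable coefficient times $X$; a differential inequality $X'\lesssim a(t)X^3+b(t)$ does not prevent blow-up, so Gronwall alone does not close here without extra smallness you do not have.

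The paper avoids Gronwall entirely. It treats $N:=\|\theta\|_{L^\infty([0,T]\times\mathbb R_+)}$ as a fixed unknown parameter and runs the $\psi$- and $\vartheta$-estimates \emph{sequentially}: first
\[
\sup_t\|\psi_x\|^2+\int_0^T\|\psi_{xx}\|^2\;\lesssim\;1+N,
\]
then---using this to bound $\int_0^T\|\psi_x\|^3\|\psi_{xx}\|\lesssim(1+N)^2$---one obtains
\[
\sup_t\|\vartheta_x\|^2+\int_0^T\|\vartheta_{xx}\|^2\;\lesssim\;1+N^2.
\]
Feeding this back into the Sobolev bound $\|\vartheta\|_{L^\infty_{t,x}}^2\lesssim\|\vartheta\|_{L^\infty_tL^2}\|\vartheta_x\|_{L^\infty_tL^2}\lesssim 1+N$ produces the algebraic inequality $N^2\lesssim 1+N$, hence $N\lesssim 1$, which is \eqref{lem2.3.3}; the $H^1$ estimate \eqref{lem2.3.2} then follows by back-substitution. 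If you want to salvage your route, replace the Gronwall step by this sequential bootstrap on $N$. (Also, the bad boundary term in the $\psi$-estimate is $u_-\psi_x^2(t,0)$, not $\phi_x^2(t,0)$; it is handled by $\psi_x^2(t,0)\le 2\|\psi_x\|\|\psi_{xx}\|$ and absorption, not by \eqref{lem2}.)
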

\begin{proof}
  First, plugging \eqref{boun_rho} into \eqref{pro2.1.8},
  we deduce
  \begin{equation}\label{pro2.4.1}
    \int_{\mathbb{R}_+}\phi_x^2+\int_0^t\int_{\mathbb{R}_+}\theta\phi_x^2
    \lesssim 1+\int_0^t\int_{\mathbb{R}_+}\left[\psi_x^2+\vartheta_x^2
    +\frac{\vartheta_x^2}{\theta^2}\right]\lesssim 1,
  \end{equation}
where we employed \eqref{lem3} and \eqref{lem2.3.1} in the last inequality.

  Next, multiply $\eqref{per}_2$ by $\frac{\psi_{xx}}{\rho}$ to derive
  \begin{equation*}
    \left(\frac{\psi_x^2}{2}\right)_t-\left[\psi_t\psi_x+\frac12u\psi_x^2\right]_x
    +\frac12u_x\psi_x^2+\mu\frac{\psi_{xx}^2}{\rho}
    =\frac{(P-\tilde{P})_x}{\rho}\psi_{xx}-\frac{g}{\rho}\psi_{xx}.
  \end{equation*}
  Integrating this last identity over $(0,T)\times\mathbb{R}_+$, we obtain from \eqref{boun_rho}
  and Cauchy's inequality that
  \begin{equation}\label{pro2.4.2}
  \begin{aligned}
    &\int_{\mathbb{R}_+}\psi_x^2
    -\int_0^T\psi_x^2(t,0)dt+\int_0^T\int_{\mathbb{R}_+}\psi_{xx}^2\\
    &\lesssim 1
    +\int_0^T\int_{\mathbb{R}_+}\left[(P-\tilde{P})_x^2
    +g^2+|\tilde{u}_x|\psi_x^2+|\psi_x|^3\right].
  \end{aligned}
  \end{equation}
  Apply Sobolev's inequality and \eqref{lem2.3.1} to obtain
  \begin{equation}\label{pro2.4.3}
   \begin{aligned}
     \int_0^T\psi_x^2(t,0)dt+\int_0^T\int_{\mathbb{R}_+}|\psi_x|^3
     \lesssim~&\int_0^T\|\psi_x\|\|\psi_{xx}\|
     +\int_0^T\|\psi_x\|^{\frac52}\|\psi_{xx}\|^{\frac12}\\
     \lesssim~&\epsilon\int_0^T\int_{\mathbb{R}_+}\psi_{xx}^2
     +C(\epsilon)\int_0^T\left[\|\psi_x\|^2
     +\|\psi_x\|^{\frac{10}{3}}\right]\\
     \lesssim ~&C(\epsilon)\left[1+
     \sup_{0\leq t\leq T}\|\psi_x\|^{\frac{4}{3}}\right]
     +\epsilon\int_0^T\int_{\mathbb{R}_+}\psi_{xx}^2.
   \end{aligned}
  \end{equation}
  In view of the identity
  $
    (P-\tilde{P})_x=R(\theta\phi_x+\rho\vartheta_x
    +\phi\tilde{\theta}_x+\vartheta\tilde{\rho}_x),
  $
  \eqref{g_h}, \eqref{pro2.4.1}, \eqref{lem2.3.1}
  and \eqref{pro2.1.4}, we derive
  \begin{equation}\label{pro2.4.4}
    \begin{aligned}
     &\int_0^T\int_{\mathbb{R}_+}\left[(P-\tilde{P})_x^2
    +g^2+|\tilde{u}_x|\psi_x^2\right]\\
    ~&\lesssim \int_0^T\int_{\mathbb{R}_+}\left[|(\theta\phi_x,\psi_x,\vartheta_x)|^2
    +|(\tilde{\rho}_x,\tilde{u}_x,\tilde{\theta}_x)|^2|(\phi,\psi,\vartheta)|^2
    \right]\\
    ~&\lesssim 1+\|\theta\|_{L^\infty([0,T]\times\mathbb{R}_+)}.
   \end{aligned}
  \end{equation}
  We plug \eqref{pro2.4.3}-\eqref{pro2.4.4} into \eqref{pro2.4.2} to get
  \begin{equation*}
    \sup_{0\leq t\leq T}\|\psi_x(t)\|^2+\int_0^T\int_{\mathbb{R}_+}\psi_{xx}^2
    \lesssim 1+\|\theta\|_{L^\infty([0,T]\times\mathbb{R}_+)}
    +\sup_{0\leq t\leq T}\|\psi_x\|^{\frac43}.
  \end{equation*}
  Then Young's inequality yields the estimate
  \begin{equation}\label{pro2.4.5}
    \sup_{0\leq t\leq T}\|\psi_x(t)\|^2+\int_0^T\int_{\mathbb{R}_+}\psi_{xx}^2
    \lesssim 1+\|\theta\|_{L^\infty([0,T]\times\mathbb{R}_+)}.
  \end{equation}

  Next, multiply $\eqref{per}_3$ by $\frac{\vartheta_{xx}}{\rho}$ and integrate
  the resulting identity over $(0,T)\times\mathbb{R}_+$ to have
  \begin{equation*}
    \begin{aligned}
      \frac{c_v}{2}\int_{\mathbb{R}_+}\vartheta_x^2
      +\kappa\int_0^T\int_{\mathbb{R}_+}\frac{\vartheta_{xx}^2}{\rho}
      =\frac{c_v}{2}\int_{\mathbb{R}_+}\vartheta_{0x}^2
      +\int_0^T\int_{\mathbb{R}_+}\left[c_v u\vartheta_x
      -\mu\frac{\psi_x^2}{\rho} + R\theta\psi_x
      -\frac{h}{\rho}\right]\vartheta_{xx},
    \end{aligned}
  \end{equation*}
  which combined with \eqref{boun_rho} implies
  \begin{equation}\label{pro2.4.6}
    \begin{aligned}
      \int_{\mathbb{R}_+}\vartheta_x^2
      +\int_0^T\int_{\mathbb{R}_+}\vartheta_{xx}^2
      \lesssim~&1+\int_0^T\int_{\mathbb{R}_+}
      \left[u^2\vartheta_x^2+\|\psi_x\|_{L^\infty}^2\psi_x^2+\theta^2\psi_x^2+h^2\right]\\
      \lesssim~&1+\int_0^T(1+\|\psi\|\|\psi_x\|)\|\vartheta_x\|^2
      +\int_0^T\|\psi_x\|^3\|\psi_{xx}\|\\
      &+\|\theta\|_{L^\infty([0,T]\times\mathbb{R}_+)}\int_0^T
      \int_{\mathbb{R}_+}\theta\psi_x^2
      +\int_0^T\int_{\mathbb{R}_+}h^2.
    \end{aligned}
  \end{equation}
  By \eqref{g_h}, \eqref{pro2.1.4} and \eqref{pro2.4.5}, we have
  $\int_0^T\int_{\mathbb{R}_+}h^2\lesssim 1,$ and
  \begin{align*}
    \int_0^T\|\psi_x\|^3\|\psi_{xx}\|
   \lesssim\sup_{0\leq t\leq T}\|\psi_x\|^2\int_0^T\left(\|\psi_x\|^2+\|\psi_{xx}\|^2\right)
   \lesssim 1+\|\theta\|_{L^\infty([0,T]\times\mathbb{R}_+)}^2.
  \end{align*}
 In light of \eqref{pro2.4.1}, \eqref{pro2.4.5} and \eqref{lem2.3.1},
we then obtain
 \begin{equation}\label{pro2.4.7}
   \int_{\mathbb{R}_+}\vartheta_x^2
      +\int_0^T\int_{\mathbb{R}_+}\vartheta_{xx}^2
   \lesssim 1+\|\theta\|_{L^\infty([0,T]\times\mathbb{R}_+)}^2.
 \end{equation}

 Finally, it follows from \eqref{lem2.3.1} and \eqref{pro2.4.7} that
  \begin{equation*}
  \begin{aligned}
   \|\theta-\tilde{\theta}\|_{L^\infty([0,T]\times\mathbb{R}_+)}^2
    \lesssim \sup_{0\leq t\leq T}\|\vartheta(t)\|\|\vartheta_x(t)\|
    \lesssim1+\|\theta\|_{L^\infty([0,T]\times\mathbb{R}_+)}.
  \end{aligned}
  \end{equation*}
  This implies \eqref{lem2.3.3} by virtue of Cauchy's inequality.
  Combine
  \eqref{pro2.4.2}, \eqref{pro2.4.5} and \eqref{pro2.4.7} to give
  \begin{equation*}
        \sup_{0\leq t\leq T}\int_{\mathbb{R}_+}[\phi_x^2+\psi_x^2+\vartheta_x^2]
    +\int_0^T\int_{\mathbb{R}_+}[\theta\phi_x^2+\psi_{xx}^2+\vartheta_{xx}^2]
    \lesssim 1,
  \end{equation*}
  which together with \eqref{lem2.3.1} yields \eqref{lem2.3.2}.
This completes the proof of the lemma.
\end{proof}

\section{Proof of Theorem \ref{thm}}
In this section, we prove Theorem \ref{thm} in six steps
 by employing the continuation argument.

\noindent{\bf Step 1}.
Set
$T_1=128\lambda_3^{-4}C_4^4$.
We choose
positive constants $\Pi$, $\lambda_i,$ and $\Lambda_i$  $(i=1,2,3)$
such that $\|(\phi_0,\psi_0,\vartheta_0)\|_1\leq \Pi$ and
\begin{equation*}\label{3.1}
  \lambda_1\leq \rho_0(x)\leq \Lambda_1,\quad
  \lambda_2\leq \theta_0(x)\leq \Lambda_2,\quad
  \lambda_3\leq \tilde{\theta}(x)\leq \Lambda_3\quad
  {\rm for\ all}\ x\in\mathbb{R}_+.
\end{equation*}
Applying Proposition \ref{Pro_loc}, we see that there exists a constant
$0<t_1\leq \min\{T_1,T_0(\lambda_1,\lambda_2,\Pi)\}$
such that the problem \eqref{per} has
a unique solution $(\phi,\psi,\vartheta)\in X(0,t_1;\frac{1}{2}\lambda_1,
2\Lambda_1;\frac{1}{2}\lambda_2,2\Lambda_2)$.

Let $0<\delta\leq \delta_1$ with $$\Xi\left(\frac{\lambda_1}{2},
2\Lambda_1,\frac{\lambda_2}{2},2\Lambda_2\right)\delta_1=\epsilon_0.$$
Then we can apply Lemmas
\ref{lem_boun}, \ref{lem_lower2} and \ref{lem_the2}
with $T=t_1$ to obtain that
the local solution $(\phi,\psi,\vartheta)$ constructed above satisfies
for each $t\in[0,t_1]$ that
\begin{equation}\label{3.2}
      \theta(t,x)\geq \frac{\lambda_2}
      {C_2 \lambda_2T_1+1} \quad {\rm for\ all}\ x\in \mathbb{R}_+,
\end{equation}
and
\begin{equation}\label{3.3}
  \begin{gathered}
    C_1^{-1}\leq \rho(t,x)\leq C_1,
    \quad
    \theta(t,x)\leq C_3\quad {\rm for\ all}\ x\in \mathbb{R}_+,\\
    \|(\phi,\psi,\vartheta)(t)\|_1^2
    +\int_0^{t}
    \left[\|\sqrt\theta\phi_x(s)\|^2+\|(\psi_{x},\vartheta_{x})(s)\|_1^2\right]ds
    \leq C_4^2.
  \end{gathered}
\end{equation}

\noindent{\bf Step 2}.
If we take $(\phi,\psi,\vartheta)(t_1,\cdot)$ as the initial data,
we can apply Proposition \ref{Pro_loc}
and extend the local solution  $(\phi,\psi,\vartheta)$ to the time
interval $[0,t_1+t_2]$ with $t_2\leq\min\{T_1-t_1, T_0(\frac{1}{C_1},
\frac{\lambda_2}{C_2 \lambda_2T_1+1},C_4)\}$.
Moreover, we have 
\begin{equation*}\label{3.4}
  \frac{1}{2C_1}\leq \rho(t,x)\leq 2C_1,\quad
  \frac{\lambda_2}
      {2(C_2 \lambda_2T_1+1)}\leq \theta(t,x)\leq 2C_3
\end{equation*}
for all $(t,x)\in[t_1,t_1+t_2]\times\mathbb{R}_+.$
Take $0<\delta\leq \min\{\delta_1,\delta_2\}$ with
$$\Xi\left(\frac{1}{2C_1},
2C_1,\frac{\lambda_2}
      {2(C_2 \lambda_2T_1+1)},2C_3\right)\delta_2=\epsilon_0.$$
Then we can employ
Lemmas
\ref{lem_boun}, \ref{lem_lower2} and \ref{lem_the2}
with $T=t_1+t_2$ that
the local solution $(\phi,\psi,\vartheta)$ satisfies \eqref{3.2} and
\eqref{3.3} for each $t\in[0,t_1+t_2]$.

\noindent{\bf Step 3}.
We repeat the argument in Step 2, to extend our
solution $(\phi,\psi,\vartheta)$ to the time interval $[0,t_1+t_2+t_3]$
with
$t_3\leq\min\{T_1-(t_1+t_2), T_0(\frac{1}{C_1},
\frac{\lambda_2}{C_2 \lambda_2T_1+1},C_4)\}$.
Assume that  $0<\delta\leq \min\{\delta_1,\delta_2\}$.
Continuing, after finitely many steps we construct the unique solution
$(\phi,\psi,\vartheta)$ existing on $[0,T_1]$ and satisfying
 \eqref{3.2} and \eqref{3.3} for each $t\in[0,T_1]$.

\noindent{\bf Step 4}.
Since $T_1\geq 128\lambda_3^{-4}C_4^4$ and
$$\sup_{0\leq t\leq T_1}\|\vartheta(t)\|_1^2
    +\int_{\frac{T_1}{2}}^{T_1}\|\vartheta_{x}(t)\|_1^2dt
    \leq C_4^2,$$
we can find a $t_0'\in[T_1/2,T_1]$ such that
$$\|\vartheta(t_0')\|\leq C_4,\quad
\|\vartheta_x(t_0')\|\leq \frac{\lambda_3^2}{8C_4}.$$
Sobolev's inequality yields
$$
\|\vartheta(t_0')\|_{L^{\infty}}\leq
\sqrt2\|\vartheta(t_0')\|^{\frac12}\|\vartheta_x(t_0')\|^{\frac12}
\leq \frac{\lambda_3}{2},$$
and so
\begin{equation*}\label{3.5}
\theta(t_0',x)\geq \tilde\theta(x)-\|\vartheta(t_0')\|_{L^{\infty}}
\geq \frac{\lambda_3}{2}\quad {\rm for\ all}\ x\in\mathbb{R}_+.
\end{equation*}
We note here that
\begin{equation*}\label{3.6}
\|(\phi,\psi,\vartheta)(t_0')\|_1
    \leq C_4,\quad
  C_1^{-1}\leq \rho(t_0',x)\leq C_1,
    \quad
    \theta(t_0',x)\leq C_3\quad {\rm for\ all}\ x\in \mathbb{R}_+.
\end{equation*}
Now we apply Proposition \ref{Pro_loc} again by
taking $(\phi,\psi,\vartheta)(t_0',\cdot)$ as the initial data.
Then we see that the solution  $(\phi,\psi,\vartheta)$ exists on
$[t_0',t_0'+t_1']$ with $t_1'\leq \min\{T_1,T_0(\frac{1}{C_1},
\frac{1}{2}\lambda_3,C_4)\}$ and satisfies
\begin{equation*}\label{3.7}
  \frac{1}{2C_1}\leq \rho(t,x)\leq 2C_1,\quad
  \frac{\lambda_3}{4}\leq \theta(t,x)\leq 2C_3
\end{equation*}
for all $(t,x)\in[t_0',t_0'+t_1']\times\mathbb{R}_+.$
If we take $0<\delta\leq \min\{\delta_1,\delta_2,\delta_3\}$ with
$$\Xi\left(\frac{1}{2C_1},
2C_1,\frac{\lambda_3}4,2C_3\right)\delta_3=\epsilon_0,$$
then we can deduce from
Lemmas
\ref{lem_boun}, \ref{lem_lower2} and \ref{lem_the2}
with $T=t_0'+t_1'$
 that for each time $t\in[t_0',t_0'+t_1']$,
the local solution $(\phi,\psi,\vartheta)$ satisfies
\eqref{3.3} and
\begin{equation}\label{3.8}
      \theta(t,x)\geq \frac{\inf_{\mathbb{R}_+}\theta(t_0',\cdot)}
      {C_2 \inf_{\mathbb{R}_+}\theta(t_0',\cdot)T_1+1}
      \geq \frac{\lambda_3}
      {C_2 \lambda_3T_1+2}
      \quad {\rm for\ all}\ x\in \mathbb{R}_+.
\end{equation}

\noindent{\bf Step 5}.
Next if we take $(\phi,\psi,\vartheta)(t_0'+t_1',\cdot)$ as the initial data,
we  apply Proposition \ref{Pro_loc}
 and construct the solution  $(\phi,\psi,\vartheta)$ existing on the time
interval $[0,t_0'+t_1'+t_2']$ with $t_2'\leq
\min\{T_1-t'_1,T_0(\frac{1}{C_1},
\frac{\lambda_3}
      {C_2 \lambda_3T_1+2},C_4)\}$ and satisfying
\begin{equation*}
  \frac{1}{2C_1}\leq \rho(t,x)\leq 2C_1,\quad
\frac{\lambda_3}
      {2(C_2 \lambda_3T_1+2)}\leq \theta(t,x)\leq 2C_3
\end{equation*}
 for all $(t,x)\in[t_0'+t_1',t_0'+t_1'+t_2']\times\mathbb{R}_+.$
Let $0<\delta\leq \min\{\delta_1,\delta_2,\delta_3,\delta_4\}$ with
$$\Xi\left(\frac{1}{2C_1},
2C_1,\frac{\lambda_3}
      {2(C_2 \lambda_3T_1+2)},2C_3\right)\delta_4=\epsilon_0.$$
Then we infer from
Lemmas
\ref{lem_boun}, \ref{lem_lower2} and \ref{lem_the2}
with $T=t_0'+t_1'+t_2'$
that
the local solution $(\phi,\psi,\vartheta)$ satisfies \eqref{3.8} and
\eqref{3.3} for each $t\in[t_0',t_0'+t_1'+t_2']$.
By assuming $0<\delta\leq \min\{\delta_1,\delta_2,\delta_3,\delta_4\}$,
we can repeatedly apply the argument above to extend the local solution
to the time interval $[0,t_0'+T_1]$.
Furthermore, we deduce that
\eqref{3.8} and
\eqref{3.3} hold for each $t\in[t_0',t_0'+T_1]$.
In view of $t_0'+T_1\geq \frac{3}{2}T_1$, we have shown that
the problem \eqref{per} admits a unique solution $(\phi,\psi,\vartheta)$ on
$[0,\frac{3}{2}T_1]$.

\noindent{\bf Step 6}.
We take $0<\delta\leq \min\{\delta_1,\delta_2,\delta_3,\delta_4\}$.
As in Steps 4 and 5, we can find $t_0''\in[t_0'+T_1/2,t_0'+T_1]$ such that
the problem \eqref{per} admits a unique solution $(\phi,\psi,\vartheta)$ on
$[0,t_0''+T_1]$, which satisfies
\eqref{3.8} and
\eqref{3.3} for each $t\in[t_0',t_0''+T_1]$.
Since $t_0''+T_1\geq t_0'+\frac{3}{2}T_1\geq 2T_1$,
we have extended the local solution $(\phi,\psi,\vartheta)$
to $[0,2T_1]$.
Repeating the above procedure, we can then extend the
solution $(\phi,\psi,\vartheta)$ step by step to a global one provided that
$\delta\leq \min\{\delta_1,\delta_2,\delta_3,\delta_4\}$.
Choosing $\epsilon_1=\min\{\delta_1,\delta_2,\delta_3,\delta_4\}$, we then
derive that
the problem \eqref{per} has a unique solution
$(\phi,\psi,\vartheta)$ satisfying
\eqref{3.3}
and
\begin{equation*}
  \inf_{\mathbb{R}_+}\theta(t,\cdot)\geq \min\left\{
\frac{\lambda_2}{C_2 \lambda_2T_1+1},
\frac{\lambda_3}{C_2 \lambda_3T_1+2}
\right\}
\end{equation*}
 for each $t\in[0,\infty)$.

Therefore, we can find constant $C_5$ depending only on
$\inf_{\mathbb{R}_+}(\rho_0,\theta_0)$
  and $\|(\phi_0,\psi_0,\vartheta_0)\|_1$ such that
\begin{equation*}
    \sup_{0\leq t<\infty}\|(\phi,\psi,\vartheta)(t)\|_1^2
    +\int_0^{\infty}
    \left[\|\phi_x(t)\|^2+\|(\psi_{x},\vartheta_{x})(t)\|_1^2\right]dt
    \leq C_5^2,
\end{equation*}
from which the large-time behavior \eqref{thm_3} follows in a standard argument.
This completes the proof of Theorem \ref{thm}.

\bigbreak

\begin{center}
{\bf Acknowledgement}
\end{center}
The authors
express much gratitude to Professor Huijiang Zhao for his support and advice.

\bibliographystyle{siam}

\bibliography{outflow}

\end{document}